\newlength{\LPlhbox}
\newtheorem*{theorem-nonumber}{Theorem}
\newtheorem{theorem}{Theorem}[section]
\newtheorem{proposition}[theorem]{Proposition}
\newtheorem{lemma}[theorem]{Lemma}
\theoremstyle{definition}
\newtheorem{example}{Example}
\theoremstyle{remark}
\newtheorem{remark}{Remark}
\newcommand{\leqnomode}{\tagsleft@true}
\newcommand{\reqnomode}{\tagsleft@false}
\newcommand{\conv}{\operatorname{conv}}
\newcommand{\relint}{\operatorname{relint}}
\newcommand{\val}{\operatorname{val}}
\newcommand{\calC}{\mathcal{C}}
\newcommand{\B}{\mathcal{B}}
\newcommand{\bbR}{\mathbb{R}}
\newcommand{\x}{\boldsymbol{x}}
\newcommand{\y}{\boldsymbol{y}}
\newcommand{\boldt}{\boldsymbol{t}}
\newcommand{\uu}{\boldsymbol{u}}
\newcommand{\vv}{\boldsymbol{v}}
\newcommand{\boldb}{\boldsymbol{b}}
\newcommand{\boldc}{\boldsymbol{c}}
\newcommand{\boldd}{\boldsymbol{d}}
\newcommand{\zero}{\boldsymbol{0}}
\newcommand{\LPblocktag}[2]{\settowidth{\LPlhbox}{(#1)}%
	\parbox{\LPlhbox}{\begin{equation}\tag{#1}#2\end{equation}}%
	\hspace*{\fill}}
\renewcommand{\geq}{\geqslant}
\renewcommand{\leq}{\leqslant}
\title{Finite adaptability in two-stage robust optimization:
asymptotic optimality and tractability}
\author{Safia Kedad-Sidhoum \and Anton Medvedev \and Frédéric Meunier}
\address[Safia Kedad-Sidhoum]{CEDRIC, CNAM, France.}
\email{safia.kedad\_sidhoum@cnam.fr}
\address[Anton Medvedev]{CEDRIC,  CNAM \& SGR, France.}
\email{anton.medvedev@lecnam.net}
\address[Frédéric Meunier]{CERMICS, ENPC, Institut Polytechnique de Paris, Marne-la-Vallée, France.}
\email{frederic.meunier@enpc.fr}
\begin{document}

\begin{abstract}

Two-stage robust optimization is a fundamental paradigm for modeling and solving optimization problems with uncertain parameters. A now classical method within this paradigm is {\em finite adaptability}, introduced by Bertsimas and Caramanis (\emph{IEEE Transactions on Automatic Control}, 2010). It consists in restricting the recourse to a finite number $k$ of possible values. In this work, we point out that the continuity assumption they stated to ensure the convergence of the method when $k$ goes to infinity is not correct, and we propose an alternative assumption for which we prove the desired convergence. Bertsimas and Caramanis also established that finite adaptability is $\NP$-hard, even in the special case when $k=2$, the variables are continuous, and only specific parameters are subject to uncertainty. We provide a theorem showing that this special case becomes polynomial when the uncertainty set is a polytope with a bounded number of vertices, and we extend this theorem for $k=3$ as well. On our way, we establish new geometric results on coverings of polytopes with convex sets, which might be interesting for their own sake.
\end{abstract}


\keywords{Two-stage robust optimization, finite adaptability, asymptotic optimality, tractability}

\subjclass[2020]{90C17}

\maketitle

\section{Introduction}
Of particular importance in operations research are two-stage robust optimization problems of the form
 \begin{equation}\label{prob:comp-adapt}
 \tag{CompAdapt$(\Omega)$}
\begin{array}{rll}
\displaystyle\min_{\x,\y(\cdot)} & \displaystyle \boldc^{\top}\x + \max_{\omega \in \Omega}\boldd^{\top}\y(\omega) \\
\text{s.t.} & A(\omega)\x+B(\omega)\y(\omega)\leq \boldb(\omega) & \forall \omega \in \Omega \, ,
\end{array}
\end{equation}
where $\Omega$ is the uncertainty set, $A(\cdot)$, $B(\cdot)$, and $\boldb(\cdot)$ are input matrices and vector depending on the uncertainty, and $\boldc$ and $\boldd$ are deterministic vectors. We assume throughout the paper that $\Omega$ is a polytope of $\bbR^n$ and that the dependence 
to the uncertainty is affine. Even if more general cases are covered in the literature, these assumptions are standard. The variables are $\x$ and $\y(\cdot)$: the variable $\x$ is the ``here-and-now'' variable, whose value has to be determined without knowing the exact $\omega \in \Omega$ that will be selected, contrary to $\y(\cdot)$---the ``wait-and-see'' variable---whose value can arbitrarily depend on $\omega$. They can moreover be constrained to be integer, depending on the context.

These problems are especially relevant in situations where actions are still possible after some uncertainty has been revealed, and have been extensively studied in the literature~\cite{yanikouglu2019survey}. A fundamental technique to solve this problem, \emph{finite adaptability}, has been introduced in 2010 by Bertsimas and Caramanis in their seminal paper \emph{Finite Adaptability in Multistage Linear Optimization}~\cite{bertsimas2010finite}. It consists in restricting the range of $\y(\cdot)$ to piecewise constant functions with at most $k$ distinct values:
\begin{equation}\label{prob:finite-adapt}
\tag{Adapt$_k(\Omega)$}
\begin{array}{rll}
\displaystyle\min_{\substack{\Omega_1,\dots,\Omega_k \\ \x,\y_1,\ldots,\y_k}} & \boldc^\top \x + {\displaystyle\max_{i \in [k]}\boldd^{\top}\y_i} &\\[3ex]
\text{s.t.} & A(\omega)\x+B(\omega)\y_i\leq \boldb(\omega) & \forall i \in [k] \;\; \forall \omega \in \Omega_i\, ,
\end{array}
\end{equation}
where the $\Omega_i$ are constrained to form a cover of $\Omega$. A natural question is how well finite adaptability approximates {\em complete adaptability}, which consists in solving~\eqref{prob:comp-adapt} in its full generality, i.e., without any restriction on the variable $\y(\cdot)$. This question is addressed by Bertsimas and Caramanis, especially for the case when $k$ goes to infinity. Proposition~1 of the aforementioned paper states that, under a continuity assumption,
\begin{equation}\label{eq:lim}
\lim_{k \rightarrow +\infty}{\val}{\eqref{prob:finite-adapt}} = {\val}{\eqref{prob:comp-adapt}} \, , 
\end{equation}
where $\val$(P) denotes the optimal value of a problem (P). We claim that Proposition 1 is not correct, namely that the continuity assumption does not necessarily ensure the convergence of ${\val}{\eqref{prob:finite-adapt}}$ to ${\val}{\eqref{prob:comp-adapt}}$ when $k$ goes to infinity. We provide two counter-examples, with distinct behaviors, and propose an alternative continuity assumption, for which we prove the desired asymptotic result. Complementary results on how bad finite adaptability can behave have been proposed by Hanasusanto, Kuhn, and Wiesemann~\cite{hanasusanto2015k} and by El Housni and Goyal~\cite{el2018piecewise}.

In addition to this discussion about the asymptotic behavior of finite adaptability, our contributions concern its tractability aspects. From a computational perspective, only $\x,\y_1,\ldots,\y_k$ are expected to be output, and not the $\Omega_i$'s. These latter can be recovered from $\x$ and the $\y_i$'s and they are not needed to implement in practice a solution provided by the finite adaptability approach. In their paper, Bertsimas and Caramanis prove that \eqref{prob:finite-adapt} is \NP-hard, even when simultaneously $k=2$, the variables are continuous, and $A(\cdot)$ and $B(\cdot)$ are deterministic ($A(\omega)$ and $B(\omega)$ do not depend on $\omega$). Further hardness results have been proposed by Hanasusanto, Kuhn, and Wiesemann~\cite{hanasusanto2015k}. Regarding tractability, Subramanyam, Gounaris, and Wiesemann~\cite{subramanyam2020k} proved that \eqref{prob:finite-adapt} is polynomial when only the objective function is uncertain. They also propose an efficient branch-and-bound approach. Recently, Cardinal, Goaoc, and Wajsbrot~\cite{cardinal2025adapt} proved other tractability results, in particular that \eqref{prob:finite-adapt} is polynomially solvable when $k$ and the dimensions of the solution and uncertainty spaces are fixed.. As far as we know, there are no other theoretical works on the computational tractability of finite adaptability.

We prove the following two theorems, where $V(\Omega)$ is the set of vertices of $\Omega$ and $E(\Omega)$ is its set of edges. 
They show in particular that when $k\leq 3$, the number of vertices of $\Omega$ is bounded, $A(\cdot)$ and $B(\cdot)$ are deterministic, and there is no integrality constraint, \eqref{prob:finite-adapt} can be solved in polynomial time. Their proof relies on new results in discrete geometry dealing with the covering of a polytope with convex sets. Note that the case $k=1$ is immediate.

\begin{theorem}
\label{thm:k_2}
    Suppose that $k=2$. Then \eqref{prob:finite-adapt} with $A(\cdot)$ and $B(\cdot)$ deterministic can be solved by at most $3^{|V(\Omega)|}$ resolutions of a linear program of polynomial size (or a mixed linear program in case of integral variables).
\end{theorem}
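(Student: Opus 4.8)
The plan is to reduce \eqref{prob:finite-adapt} with $k=2$ to a family of linear programs indexed by a combinatorial ``type'' of the cover, and to bound the number of types by $3^{|V(\Omega)|}$. First I would eliminate the sets $\Omega_1,\Omega_2$. Fixing $\x,\y_1,\y_2$ and using that $A,B$ are deterministic and $\boldb(\cdot)$ is affine, the set $P_i(\x,\y_i) := \{\omega \in \bbR^n : A\x + B\y_i \leq \boldb(\omega)\}$ is a polyhedron whose defining half-spaces have normals independent of $(\x,\y_i)$ (they come from the linear part of $\boldb$) and offsets that are affine in $(\x,\y_i)$; in particular $P_1$ and $P_2$ share the same facet normals. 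The constraints of \eqref{prob:finite-adapt} read $\Omega_i \subseteq P_i$, and since the objective does not depend on the $\Omega_i$, choosing $\Omega_i = \Omega \cap P_i$ is optimal. Hence the problem is equivalent to minimizing $\boldc^\top\x + \max(\boldd^\top\y_1,\boldd^\top\y_2)$ over $(\x,\y_1,\y_2)$ subject to the single covering constraint $\Omega \subseteq P_1 \cup P_2$. Introducing an epigraph variable $t$ with $t \geq \boldd^\top\y_1$ and $t \geq \boldd^\top\y_2$ linearizes the objective, so everything reduces to encoding $\Omega \subseteq P_1 \cup P_2$ by linear constraints.

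The second step is to make this covering constraint combinatorial. Writing $P_i = \{\omega : f_\ell(\omega) \geq c_\ell^i \ \forall \ell\}$ with the $f_\ell$ fixed affine functions and the offsets $c_\ell^i$ affine in $(\x,\y_i)$, a point of $\Omega$ escapes $P_1 \cup P_2$ exactly when some row of $P_1$ and some row of $P_2$ are simultaneously violated; thus $\Omega \subseteq P_1 \cup P_2$ if and only if for every ordered pair of rows $(\ell,\ell')$ the polytope $\Omega$ is covered by the two half-spaces $\{f_\ell \geq c_\ell^1\}$ and $\{f_{\ell'} \geq c_{\ell'}^2\}$. For a pair of half-spaces the uncovered region is an intersection of two open half-spaces, which in dimension at least two is unbounded whenever it is nonempty; an induction over the face lattice (a nonempty intersection of $\Omega$ with such an unbounded convex set must already meet the boundary, hence an edge) then shows that $\Omega$ is covered by the two half-spaces if and only if every edge of $\Omega$ is. This skeleton reduction is the geometric heart of the argument and the place where the covering results are used.

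It remains to express edge-coverage by linear constraints, which is where the $3^{|V(\Omega)|}$ enumeration enters. Along a fixed edge $e = [u,w]$ each $\{f_\ell \geq c_\ell^i\}$ traces a prefix or a suffix of $e$, the orientation being fixed data; when the two traces have opposite orientations their union covers $e$ iff the corresponding crossing parameters satisfy a single inequality that is \emph{linear} in $(c_\ell^1,c_{\ell'}^2)$ (the crossing parameter is affine in the offset because the normals are fixed), while when they have the same orientation coverage of $e$ is automatic once the relevant endpoint lies in $P_1 \cup P_2$. The only genuinely disjunctive requirement that survives is therefore that each vertex be covered, and which of $P_1$, $P_2$, or both contains a given vertex is exactly the information I would guess: assigning to each $v \in V(\Omega)$ one of the three labels ``in $C_1 \setminus C_2$'', ``in $C_2 \setminus C_1$'', ``in $C_1 \cap C_2$'' yields at most $3^{|V(\Omega)|}$ types, and for each type the induced vertex-membership constraints together with the linear edge inequalities form a linear program (a mixed-integer one if $\x$ or the $\y_i$ are integral) of polynomial size whose feasibility is equivalent to $\Omega \subseteq P_1 \cup P_2$ for that type. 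Taking the best value over all types gives $\val\eqref{prob:finite-adapt}$.

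I expect the main obstacle to be the geometric skeleton reduction together with its bookkeeping: one must verify carefully that coverage of a polytope by two sets is detected on its $1$-skeleton, treat open-versus-closed boundaries correctly, and check that the three-way vertex labeling both resolves every disjunction and stays consistent across the edges incident to a common vertex---this is precisely where the ``in $C_1 \cap C_2$'' label is needed, since such a vertex can anchor coverage from either side. The linearity of the surviving edge conditions, by contrast, follows from the fixed-normal structure and should be routine.
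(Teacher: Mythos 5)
Your proposal is correct, and its outer architecture coincides with the paper's: you replace each $\Omega_i$ by $\Omega \cap P_i$ where $P_i = \{\omega : A\x + B\y_i \leq \boldb(\omega)\}$ (this is exactly Lemma~\ref{lem:convexity}), you reduce the covering constraint $\Omega \subseteq P_1 \cup P_2$ to a condition on the $1$-skeleton, and you enumerate the $3^{|V(\Omega)|}$ assignments of vertices to $P_1$, $P_2$, or both, solving one polynomial-size (mixed) linear program per assignment. The genuine divergence is in the two technical ingredients. For the skeleton reduction, the paper works with \emph{arbitrary} closed sets covering the boundary (Lemma~\ref{lem:cover_polytope}), at the cost of invoking Fan's theorem (Lyusternik--Shnirel'mann); you instead exploit the fact that, $A(\cdot)$ and $B(\cdot)$ being deterministic and $\boldb(\cdot)$ affine, $P_1$ and $P_2$ are polyhedra with common fixed normals, so that $\Omega \subseteq P_1 \cup P_2$ decomposes into the pairwise conditions $\Omega \subseteq \{f_\ell \geq c^1_\ell\} \cup \{f_{\ell'} \geq c^2_{\ell'}\}$, each of which is settled by the elementary remark that a nonempty intersection of two open half-spaces is unbounded and therefore, by induction on the face lattice, must meet an edge whenever it meets $\Omega$. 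This is more self-contained (no combinatorial topology), but it is tied to the polyhedral structure and does not extend to $k=3$: a nonempty intersection of three open half-spaces can be bounded, and coverage by three convex sets is genuinely not detected on the $1$-skeleton (the paper must bring in the two-dimensional faces and the points $t_f$), which is why the Fan-based Lemmas~\ref{lem:cover_polytope} and~\ref{lem:colorful_cover} are the engine behind Theorem~\ref{thm:k_3}. For the linearization along a bichromatic edge, the paper keeps a witness point $t_e \in e$ as an LP variable subject to $A\x + B\y_i \leq \boldb(t_e)$ for both $i$, as in \eqref{prog:k2}; you eliminate this variable in favour of one crossing inequality per pair of rows with opposite monotonicity along the edge. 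The two formulations are equivalent---two closed subsegments of an edge containing opposite endpoints cover the edge if and only if they intersect, and the crossing parameters are affine in the offsets precisely because the normals are fixed---so both yield valid linear programs; the paper's is more compact (one extra variable and linearly many constraints per such edge, rather than quadratically many inequalities) and is the form that carries over to $k=3$.
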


This linear program and the way to use it for solving the problem is described in Section~\ref{sec:alg_2_3}. From this, it is actually rather straightforward to show that the problem \eqref{prob:finite-adapt}, under the condition of the theorem, can be solved by a single mixed integer linear program of polynomial size, with $O(|E(\Omega)|)$ binary variables. This shows that our result has also concrete consequences for practical purposes. See Section~\ref{sec:explicit_milp} for the detailed mixed integer linear program.

\begin{theorem}
\label{thm:k_3}
    Suppose that $k=3$. Then \eqref{prob:finite-adapt} with $A(\cdot)$ and $B(\cdot)$ deterministic can be solved by at most $7^{|V(\Omega)|+|E(\Omega)|}$ resolutions of a linear program of polynomial size (or a mixed linear program in case of integral variables).
\end{theorem}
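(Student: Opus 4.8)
The plan is to generalize the strategy behind Theorem~\ref{thm:k_2} from the $0$-skeleton to the $1$-skeleton of $\Omega$. As in the case $k=2$, the first reduction is to convex cover elements: since $A(\cdot)$ and $B(\cdot)$ are deterministic, for fixed $\x$ and $\y_i$ the set $\{\omega : A\x + B\y_i \le \boldb(\omega)\}$ is a polyhedron, because $\boldb(\cdot)$ is affine, hence convex; therefore $A\x + B\y_i \le \boldb(\omega)$ holds for all $\omega \in \Omega_i$ if and only if it holds for all $\omega \in \conv(\Omega_i)$. Replacing each $\Omega_i$ by $\conv(\Omega_i)$ preserves feasibility and can only enlarge the cover, so I may assume that $\Omega_1,\Omega_2,\Omega_3$ are convex and cover $\Omega$. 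Writing the objective as $\boldc^\top\x + t$ with $\boldd^\top\y_i \le t$ for all $i \in [3]$ makes everything linear in $(\x,\y_1,\y_2,\y_3,t)$ once the robust constraints are discretized.

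Second, I would enumerate the combinatorial type of the cover. For $k=2$, Theorem~\ref{thm:k_2} shows it is enough to guess, for each vertex $v \in V(\Omega)$, the nonempty set of parts containing $v$; the point where the two parts meet along each edge then enters only as a continuous variable of the linear program. For $k=3$ this is no longer sufficient, because three convex sets can meet a single edge in an arrangement that the vertex labels do not determine. I would therefore guess, in addition, for each edge $e \in E(\Omega)$, the nonempty subset of $\{1,2,3\}$ of parts meeting the relative interior of $e$. This yields at most $7^{|V(\Omega)|+|E(\Omega)|}$ guesses, matching the announced bound, $7$ being the number of nonempty subsets of $\{1,2,3\}$.

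Third, for each fixed labeling I would write a linear program of polynomial size. Its variables are $\x,\y_1,\y_2,\y_3,t$ together with the convex-combination coefficients locating, along each edge, the points where consecutive parts meet. The constraints are of two kinds: consistency constraints guaranteeing that the prescribed labeling is realizable by an actual convex cover of $\Omega$ (these come from the discrete-geometry covering result), and robust feasibility constraints $A\x+B\y_i \le \boldb(\omega)$ imposed only at the vertices of $\Omega$ lying in $\Omega_i$ and at the edge split points incident to $\Omega_i$. The correctness of restricting the robust constraints to this finite set rests on showing that, for such a cover, the minimum of each affine coordinate $b_j(\cdot)$ over a convex part $\Omega_i$ is attained on the $1$-skeleton of $\Omega$, that is, at a vertex or an edge split point. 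Solving these at most $7^{|V(\Omega)|+|E(\Omega)|}$ linear programs and keeping the best value solves \eqref{prob:finite-adapt}; the integral case is identical with mixed linear programs.

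The main obstacle is the discrete-geometry statement underlying the second and third steps: that for three convex sets the $1$-skeleton labeling already captures both the realizability of the cover and the location of the binding extreme points. The delicate point, absent when $k=2$, is to prove that no face of dimension at least two forces a new extreme point of some $\Omega_i$ off the $1$-skeleton, and that the arrangement of the (up to three) parts along each edge is controlled by the labeling together with the continuous split variables, so that the enumeration does not exceed $7^{|V(\Omega)|+|E(\Omega)|}$. Establishing this covering theorem---showing that covering $\Omega$ by three convex sets reduces to covering its $1$-skeleton compatibly---is where the real work lies, the remaining steps being routine linear programming once it is in hand.
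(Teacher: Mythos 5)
Your overall architecture (convexification via deterministic $A$, $B$ and affine $\boldb(\cdot)$, enumeration of vertex and edge labels giving $7^{|V(\Omega)|+|E(\Omega)|}$ programs, continuous split points along edges) matches the paper's, but there is a genuine gap exactly at the step you yourself flag as ``where the real work lies'': the covering statement you hope for is false. It is not true that no face of dimension at least two forces a point of some $\Omega_i$ off the $1$-skeleton; equivalently, imposing the robust constraints only at vertices and edge split points is not sound. Concrete counterexample: let $\Omega$ (or a two-dimensional face of it) be a regular hexagon $v_1\cdots v_6$ with center $c$, and take $\Omega_1=\conv\{v_1,v_2,v_3,c\}$, $\Omega_2=\conv\{v_3,v_4,v_5,c\}$, $\Omega_3=\conv\{v_5,v_6,v_1,c\}$. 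These three convex sets cover the hexagon, but the convex hulls of their traces on the $1$-skeleton are the triangles $\conv\{v_1,v_2,v_3\}$, $\conv\{v_3,v_4,v_5\}$, $\conv\{v_5,v_6,v_1\}$, which leave the central triangle around $c$ uncovered. Consequently the optimal value of your LP can be strictly smaller than $\val$\eqref{prob:finite-adapt}: it accepts tuples $(\x,\y_1,\y_2,\y_3)$ that are feasible at every vertex and every edge split point, yet for which points near $c$ lie in no $\Omega_i$ at all. This phenomenon is absent when $k=2$ (where Lemma~\ref{lem:cover_polytope} applies directly), which is precisely why the paper's $k=3$ proof is substantially more involved.

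The paper repairs this by enriching the linear program: \eqref{eq:prob_part_reformulation} carries, beyond the edge points $t_e,u_e,v_e$, an extra continuous point variable $t_f\in f$ for every two-dimensional face $f\in F(V_1,V_2,V_3,\lambda)$ (the faces on whose boundary the three parts pairwise intersect), together with the constraints $A\x+B\y_i\leq\boldb(t_f)$ for all three $i$. Soundness---that any LP solution yields convex sets covering all of $\Omega$, not just its $1$-skeleton---is Lemma~\ref{lem:cover-2}, whose proof uses Fan's theorem (through Lemmas~\ref{lem:cover_polytope} and~\ref{lem:colorful_cover}) and induction on the dimension of faces; completeness---that any convex cover is captured by some labeling---requires Berge's theorem, via Lemma~\ref{lem:3conv}, to guarantee that on every face in $F(V_1,V_2,V_3,\lambda)$ the three $\Omega_i$'s genuinely have a common point that can serve as $t_f$ (in the hexagon example, the point $c$). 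Without these face variables and the two covering theorems your second and third steps cannot be completed. A secondary remark: the ``consistency constraints guaranteeing realizability'' you postulate are not needed in the paper's formulation; the ordering constraints on the split points are the only structural constraints, and realizability is exactly what the soundness lemmas deliver.
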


Similarly as for the case $k=2$, it is not too difficult to show that under the condition of the theorem, \eqref{prob:finite-adapt} can be solved by a single mixed integer linear program of polynomial size, with a number of binary variables which is linear in the number of two-dimensional faces of $\Omega$. 

We conjecture that these theorems can be extended to any $k$ but we failed to find the proper generalization of our results in discrete geometry.

We also establish the following result, which implies a polynomial algorithm in the case of continuous variables.

\begin{proposition}
\label{prop:omega_dim_1}
    Suppose that $\Omega$ is one-dimensional. Then \eqref{prob:finite-adapt} with $A(\cdot)$ and $B(\cdot)$ deterministic can be solved by the resolution of a single linear program of polynomial size (or a single mixed linear program in case of integral variables).
\end{proposition}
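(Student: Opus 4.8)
The plan is to show that, in the one-dimensional case, an optimal cover of $\Omega$ can be encoded by a handful of \emph{ordered breakpoints} along the segment, which turns \eqref{prob:finite-adapt} into a single linear program with no combinatorial enumeration at all.

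First I would parametrize the segment $\Omega$ as $\omega(t)=\uu+t(\vv-\uu)$ for $t\in[0,1]$, where $\uu,\vv$ are its two endpoints. Since $A(\cdot)$ and $B(\cdot)$ are deterministic and $\boldb(\cdot)$ is affine, the composite map $t\mapsto \boldb(\omega(t))$ is affine, say $\boldb(\omega(t))=\boldb_0+t\boldb_1$ for constant vectors $\boldb_0,\boldb_1$. The first key observation is that, for fixed $\x$ and a fixed recourse $\y_i$, the set of parameters at which this recourse is feasible, namely $F_i=\{t\in[0,1]:A\x+B\y_i\leq \boldb_0+t\boldb_1\}$, is a subinterval of $[0,1]$, being the solution set of finitely many affine inequalities in the single variable $t$. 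Moreover, for fixed $\x,\y_1,\dots,\y_k$, the covering constraint of \eqref{prob:finite-adapt} is equivalent to requiring that the intervals $F_1,\dots,F_k$ cover $[0,1]$: one may always take $\Omega_i$ to be the image under $\omega(\cdot)$ of $F_i$ itself.

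The second, and main, step is a purely combinatorial reduction: whenever $k$ subintervals $F_1,\dots,F_k$ cover $[0,1]$, one can relabel them and choose breakpoints $0=s_0\leq s_1\leq\dots\leq s_k=1$ so that $[s_{i-1},s_i]\subseteq F_i$ for every $i$. This follows from the standard greedy interval-covering argument---repeatedly extend the covered prefix using the available interval reaching farthest to the right---padding with degenerate pieces $s_{i-1}=s_i$ if fewer than $k$ intervals are needed. Because the recourses $\y_1,\dots,\y_k$ are interchangeable decision variables, assuming this ordered, contiguous structure is without loss of generality. Combined with the previous paragraph, this shows that \eqref{prob:finite-adapt} admits a feasible solution of a given value if and only if there are $\x$, recourses $\y_1,\dots,\y_k$, and breakpoints $0=s_0\leq\dots\leq s_k=1$ with each $\y_i$ feasible on $[s_{i-1},s_i]$, at the same objective value.

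Finally I would linearize. Since $\boldb(\omega(t))$ is affine in $t$, feasibility of $\y_i$ on the whole interval $[s_{i-1},s_i]$ is equivalent to feasibility at its two endpoints, i.e.
\[
A\x+B\y_i\leq \boldb_0+s_{i-1}\boldb_1 \quad\text{and}\quad A\x+B\y_i\leq \boldb_0+s_i\boldb_1 .
\]
These inequalities are linear in the unknowns $\x,\y_i,s_{i-1},s_i$, since $\boldb_1$ is a constant vector and the terms $s_{i-1}\boldb_1$ and $s_i\boldb_1$ introduce no bilinear products. Linearizing the objective with an epigraph variable $z\geq\boldd^\top\y_i$ for all $i\in[k]$, and adding the ordering constraints $0=s_0\leq s_1\leq\dots\leq s_k=1$, yields a single linear program with $O(k)$ additional scalar variables and $O(k)$ extra constraint blocks, whose optimal value equals $\val\eqref{prob:finite-adapt}$; integrality of $\x$ and of the $\y_i$ simply turns it into a mixed integer linear program, the breakpoints $s_i$ remaining continuous. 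The main obstacle is the second step: one must check carefully that the ordered contiguous cover is genuinely without loss of generality---that relabeling the recourses is harmless and that degenerate pieces can always be supplied with a feasible recourse---and that no bilinear term sneaks in when passing from the abstract covering condition to the endpoint inequalities, which is precisely what fails in higher dimension and forces the enumeration in Theorems~\ref{thm:k_2} and~\ref{thm:k_3}.
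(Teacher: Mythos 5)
Your overall architecture is exactly the paper's: the paper also reduces the one-dimensional case to a single linear program (its program~\eqref{eq:prob_part_reformulation_1d}) whose variables are $\x$, the recourses $\y_1,\ldots,\y_k$, and ordered breakpoints $0=\omega_0\leq\omega_1\leq\cdots\leq\omega_k=1$, with feasibility of $\y_i$ imposed only at the two endpoints $\omega_{i-1},\omega_i$ (sufficient by affinity of $\boldb$), and it likewise invokes Lemma~\ref{lem:convexity} to replace the $\Omega_i$'s by closed intervals. The one place where you and the paper diverge is the combinatorial core, and there your claim, as stated, is false: it is not true that any cover of $[0,1]$ by $k$ closed intervals can be turned, after a \emph{bijective} relabeling, into a contiguous cover with $[s_{i-1},s_i]\subseteq F_i$ for all $i$. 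Take $k=3$, $F_1=[0,1]$, $F_2=[0.3,0.4]$, $F_3=[0.6,0.7]$: in any chain $0=s_0\leq s_1\leq s_2\leq s_3=1$ the first piece contains $0$ and the last piece contains $1$, and $F_1$ is the only interval containing either point, so both pieces would have to be assigned to $F_1$---no permutation works. This is precisely the ``main obstacle'' you flag at the end (whether ``degenerate pieces can always be supplied with a feasible recourse''), but you leave it unresolved, and it cannot be resolved by relabeling alone.

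The repair is to drop bijectivity: run your greedy argument to obtain distinct intervals $F_{\sigma(1)},\ldots,F_{\sigma(m)}$ with $m\leq k$, $[t_{j-1},t_j]\subseteq F_{\sigma(j)}$, $t_0=0$, $t_m=1$, and then pad with degenerate pieces $[1,1]$ assigned \emph{again} to $F_{\sigma(m)}$, i.e., feed the linear program the recourse list $\y_{\sigma(1)},\ldots,\y_{\sigma(m)},\y_{\sigma(m)},\ldots,\y_{\sigma(m)}$. Duplication is harmless because the $\y_i$'s are free variables of the program, and discarding some recourses while repeating others can only decrease the objective $\max_i \boldd^{\top}\y_i$, since the maximum is then taken over a sub-multiset of the original values; this observation is what your write-up is missing, and it is what makes \eqref{eq:prob_part_reformulation_1d} a valid reformulation of \eqref{prob:finite-adapt}. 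It is worth noting that the paper's own proof of this direction has the same defect: it sorts the intervals by left endpoints and asserts $[\omega_i^{\text{left}},\omega_{i+1}^{\text{left}}]\subseteq\Omega_i$, which fails on the same example ($[0.3,0.6]\not\subseteq[0.3,0.4]$). Your greedy selection---among the intervals containing the current point, take one reaching farthest right---combined with recourse duplication is the correct fix for both arguments. All your remaining steps (the identification of the covering constraint with the covering by the feasibility intervals $F_i$, the endpoint linearization with no bilinear terms, the epigraph variable, and keeping the breakpoints continuous in the mixed-integer case) are correct and match the paper.
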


The problems considered in Theorem~\ref{thm:k_2}, Theorem~\ref{thm:k_3}, Proposition~\ref{prop:omega_dim_1}, and the \NP-hardness result of Bertsimas and Caramanis mentioned above share a common feature: the matrices $A(\cdot)$ and $B(\cdot)$ are deterministic. We remark that the complete adaptable counterpart of these problems can actually be solved in polynomial time with respect to the number of vertices of $\Omega$. Indeed, by convexity it boils down to only determine $\y(\cdot)$ for the vertices of $\Omega$. In other words, for such problems,~\eqref{prob:comp-adapt} is easier to solve than~\eqref{prob:finite-adapt}. It is the study of finite adaptability per se that motivates our results and the \NP-hardness result of Bertsimas and Caramanis. Moreover, it seems reasonable to imagine problems where the decision maker wants to reduce the number of possible values of ``wait-and-see'' variables, making the finite adaptable setting the most suitable one.

\section{Asymptotic optimality}

The continuity assumption of the paper by Bertsimas and Caramanis~\cite[Section 3]{bertsimas2010finite} is
\begin{quote}
    \emph{{\bf \emph{Continuity assumption:} } For any $\varepsilon>0$, for any $\omega \in \Omega$, there exist $\delta>0$ and a point $(\x,\y)$, feasible for $A(\omega), B(\omega)$ and within $\varepsilon$ of optimality, such that $(\x,\y)$ is also feasible for $A(\omega'),B(\omega')$ for all $\omega' \in \Omega$ with $\|\omega-\omega'\|\leq \delta$.}
\end{quote}
A point $(\x,\y)$ is {\em within $\varepsilon$ of optimality} if the quantity $\boldc^{\top}\x + \boldd^{\top}\y - \varepsilon$ is at most ${\val}{\eqref{prob:comp-adapt}}$. We provide two counter-examples to the statement that this continuity assumption ensures the convergence given in equation~\eqref{eq:lim}.

\begin{example}[Counter-example with a finite gap]
\label{ex:example1}
Consider the following two-stage robust optimization problem:

\reqnomode
\LPblocktag{P}{\label{prob:ex_real}}%
\hspace{-12cm}
\begin{minipage}{\linewidth-1cm}
	\begin{align}\notag
\min\quad & x_1 \\
\text{s.t.}\quad & x_2 - x_3 \leq x_1 \label{eq:p_1} \\
& x_3 \leq \omega x_4\leq x_2 & \forall \omega \in [0,1] \label{eq:p_2} \\
&(\omega + 2)y(\omega) + (3 - \omega)x_4 = 10 & \forall \omega \in [0,1] \label{eq:p_3}\\
\notag &x_1,x_2,x_3,x_4\geq 0, \; y\colon[0,1]\rightarrow\mathbb{R}_+\, .
\end{align}
\end{minipage}

\vspace{0.33cm}

\noindent It is a special case of~\eqref{prob:comp-adapt}. An optimal solution is given by $x_1 = x_2 = x_3 = x_4 = 0$ and $y(\omega)=\frac{10}{\omega + 2}$, and thus ${\val}{\eqref{prob:ex_real}}=0$.

Problem~\eqref{prob:ex_real} satisfies the continuity assumption, as we check now. Let $\varepsilon>0$ and $\omega \in [0,1]$. Set
\[\delta \coloneqq \frac{\varepsilon}{4}, \quad x_1 \coloneqq \varepsilon, \quad x_2 \coloneqq 2(\omega + \delta), \quad x_3 \coloneqq 2(\omega - \delta), \quad x_4 \coloneqq 2, \quad y \coloneqq 2 \, .
\]
The point $(x_1,x_2,x_3,x_4,y)$ satisfies the constraints of~\eqref{prob:ex_real} for all $\omega'\in[0,1]$ such that $|\omega-\omega'|\leq \delta$ and is within $\varepsilon$ of optimality. Therefore, the continuity assumption is satisfied.

Consider now a feasible solution of the finite adaptability version of Problem~\eqref{prob:ex_real} for some $k$. One $\Omega_i$ of this solution must contain at least two distinct points of the uncertainty set $[0,1]$. By consequence, for this $\Omega_i$ the only value of $(x_4,y)$ satisfying constraint~\eqref{eq:p_3} is $x_4=y=2$. As $x_4=2$, constraints~\eqref{eq:p_1}~and~\eqref{eq:p_2} imply that the objective value is lower bounded by $2$. This latter value is actually achieved by $x_1 = x_2 = x_4 = y = 2$ and $x_3=0$.

So, even though the continuity assumption holds, the equality~\eqref{eq:lim} is not verified: the left-hand term is $2$, while the right-hand term is $0$.
\end{example}

In Example~\ref{ex:example1} the gap between the optimal value of Problem~\eqref{prob:ex_real} and the optimal value of its finite adaptability version is equal to $2$. Adding a constraint $x_1\leq 1$ in Problem~\eqref{prob:ex_real} makes this gap infinite. Below we propose another example with an infinite gap that does not come from a trivial constraint. 

\begin{example}[Counter-example with an infinite gap] Consider the following two-stage robust optimization problem:

\reqnomode
\LPblocktag{Q}{\label{prob:ex}}%
\hspace{-12cm}
\begin{minipage}{\linewidth-1cm}
	\begin{align}\notag
\min\quad & x_1 \\
\text{s.t.}\quad &\omega - x_1 \leq y(\omega) \leq \omega+x_1 & \forall \omega \in [0,1]\label{eq:q_1} \\
&x_1(\omega - 0.1) \leq x_2 \leq x_1(\omega + 0.1) & \forall \omega \in [0,1] \label{eq:q_2} \\
\notag & x_1,x_2\in[0,1], \; y\colon[0,1]\rightarrow[0,1]\, .
\end{align}
\end{minipage}

\vspace{0.33cm}

\noindent It is a special case of~\eqref{prob:comp-adapt}. An optimal solution is given by $x_1 = x_2 = 0$ and $y(\omega)=\omega$, and thus ${\val}{\eqref{prob:ex}}=0$.

Problem~\eqref{prob:ex} satisfies the continuity assumption, as we check now. Let $\varepsilon>0$ and $\omega \in [0,1]$. Set
\[\delta \coloneqq \min(0.1,\varepsilon), \quad x_1 \coloneqq \varepsilon, \quad x_2 \coloneqq \varepsilon\omega, \quad y \coloneqq \omega \, .
\]
The point $(x_1,x_2,y)$ satisfies the constraints of~\eqref{prob:ex} for all $\omega'\in[0,1]$ such that $|\omega-\omega'|\leq \delta$ and is within $\varepsilon$ of optimality. Therefore, the continuity assumption is satisfied.

There is no value of $k$ for which the finite adaptability version of Problem~\eqref{prob:ex} is feasible, as we prove now. If $x_1=0$, then the constraint~\eqref{eq:q_1} imposes that the (finitely many) $y_i$'s take all values in $[0,1]$, which is impossible. If $x_1 > 0$, then the constraint~\eqref{eq:q_2} imposes that simultaneously $ x_2 \leq 0.1x_1$ (obtained with $\omega=0$) and $0.9x_1 \leq x_2$ (obtained with $\omega=1$), which is impossible.

So, even though the continuity assumption holds, the equality~\eqref{eq:lim} is not verified: the left-hand term is $+\infty$, while the right-hand term is $0$. Here, although the continuity assumption is verified and the complete adaptability version is feasible, the finite adaptability version is never feasible.
\end{example}
We propose instead the following continuity assumption.
\begin{quote}
    \emph{{\bf \emph{Modified continuity assumption:} } For any $\varepsilon>0$, there exists $\x$ such that, for every $\omega \in \Omega$, there exist $\delta>0$ and $\y$ satisfying the following two conditions simultaneously:
    \begin{itemize}[leftmargin=*]
        \item[$\bullet$] $(\x,\y)$ is feasible for $A(\omega'),B(\omega')$ for all $\omega' \in \Omega$ with $\|\omega-\omega'\|\leq \delta$.
        \item[$\bullet$] $(\x,\y)$ is within $\varepsilon$ of optimality.
    \end{itemize}
        }
\end{quote}
This ``modified continuity assumption'' is more restrictive in the sense that every problem satisfying the ``modified continuity assumption'' satisfies the original ``continuity assumption.''
Problem~\eqref{prob:ex} can be used to see that the converse is not true. Note however that if there is no ``here-and-now'' variable $\x$, i.e., $\boldc=\zero$ and $A(\omega)$ is the zero-matrix for all $\omega$, then the two assumptions coincide.

We establish now a version of the Proposition~1 of the paper by Bertsimas and Caramanis~\cite{bertsimas2010finite} relying on the modified assumption. This shows that the general message of their Proposition~1 is correct, namely that, under some continuity assumption, finite adaptability converges to complete adaptability.

\begin{theorem}
Assume that $\Omega$ is a compact subset of $\bbR^n$. If the ``modified continuity assumption'' holds, then 
\[
\lim_{k \rightarrow +\infty}{\val}{\eqref{prob:finite-adapt}} = {\val}{\eqref{prob:comp-adapt}} \, .
\]
\end{theorem}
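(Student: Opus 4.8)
The plan is to establish the two inequalities $\liminf_{k\to\infty}\val\eqref{prob:finite-adapt}\ge\val\eqref{prob:comp-adapt}$ and $\limsup_{k\to\infty}\val\eqref{prob:finite-adapt}\le\val\eqref{prob:comp-adapt}$ separately. The first one is the easy direction: for every fixed $k$, any feasible solution $(\Omega_1,\dots,\Omega_k,\x,\y_1,\dots,\y_k)$ of \eqref{prob:finite-adapt} gives a feasible solution of \eqref{prob:comp-adapt} of no larger objective value, by letting $\y(\cdot)$ be the piecewise constant function equal to $\y_i$ on $\Omega_i$; hence $\val\eqref{prob:finite-adapt}\ge\val\eqref{prob:comp-adapt}$ for all $k$. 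Since padding a cover with empty regions shows that $k\mapsto\val\eqref{prob:finite-adapt}$ is non-increasing, the limit exists in $[\val\eqref{prob:comp-adapt},+\infty]$, and only the upper bound remains. I will assume $\val\eqref{prob:comp-adapt}$ is finite, which is implicit in the modified continuity assumption, since otherwise no $(\x,\y)$ can be within $\varepsilon$ of optimality.

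For the upper bound I would fix $\varepsilon>0$ and apply the modified continuity assumption to obtain a single vector $\x$ such that every $\omega\in\Omega$ comes with a radius $\delta_\omega>0$ and a vector $\y_\omega$ for which $(\x,\y_\omega)$ is feasible for $A(\omega'),B(\omega')$ whenever $\|\omega-\omega'\|\le\delta_\omega$, and $(\x,\y_\omega)$ is within $\varepsilon$ of optimality. The open balls $\{\omega'\in\bbR^n:\|\omega-\omega'\|<\delta_\omega\}$, for $\omega\in\Omega$, form an open cover of $\Omega$. As $\Omega$ is compact, I can extract a finite subcover indexed by points $\omega_1,\dots,\omega_m$. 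I then let $\Omega_j$ be the intersection of $\Omega$ with the open ball of radius $\delta_{\omega_j}$ centered at $\omega_j$, obtaining a cover of $\Omega$ by $m$ regions, and I set $\y_j\coloneqq\y_{\omega_j}$.

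Next I would check that $(\Omega_1,\dots,\Omega_m,\x,\y_1,\dots,\y_m)$ is feasible for \eqref{prob:finite-adapt} with $k=m$ and has value at most $\val\eqref{prob:comp-adapt}+\varepsilon$. Feasibility is checked region by region: any $\omega\in\Omega_j$ satisfies $\|\omega-\omega_j\|<\delta_{\omega_j}$, so the first condition of the assumption gives $A(\omega)\x+B(\omega)\y_j\le\boldb(\omega)$. For the objective, because the here-and-now vector $\x$ is common to all regions and each $(\x,\y_j)$ is within $\varepsilon$ of optimality, $\boldc^{\top}\x+\boldd^{\top}\y_j\le\val\eqref{prob:comp-adapt}+\varepsilon$ for every $j\in[m]$, hence $\boldc^{\top}\x+\max_{j\in[m]}\boldd^{\top}\y_j\le\val\eqref{prob:comp-adapt}+\varepsilon$. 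This exhibits a feasible solution of \eqref{prob:finite-adapt} with $k=m$ of value at most $\val\eqref{prob:comp-adapt}+\varepsilon$, so that $\val\eqref{prob:finite-adapt}\le\val\eqref{prob:comp-adapt}+\varepsilon$ for $k=m$, and by monotonicity for every $k\ge m$. Letting $\varepsilon\to0$ yields the upper bound, and combined with the first paragraph the announced limit.

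The step I expect to be the crux, and the precise place where the modified assumption is indispensable, is the collapse of the maximum in the objective: it is only because a single $\x$ serves all regions that $\boldc^{\top}\x$ factors out of $\max_{j}$ and can be bounded uniformly by $\val\eqref{prob:comp-adapt}+\varepsilon$. Under the original assumption the vector $\x$ may depend on $\omega$, so the finitely many regions could demand mutually incompatible here-and-now decisions, and this factorization would break down---Problem \eqref{prob:ex} shows this failure is genuine. The remaining point requiring care is purely topological: compactness of $\Omega$ is exactly what converts the pointwise radii $\delta_\omega$ into a finite subcover, and using open balls ensures the closed-ball feasibility condition $\|\omega-\omega'\|\le\delta_\omega$ holds with room to spare on each region.
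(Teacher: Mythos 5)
Your proposal is correct and follows essentially the same route as the paper's proof: use the single here-and-now vector $\x$ from the modified continuity assumption, extract a finite subcover of the balls $\B(\omega,\delta^{\omega})$ by compactness, and turn it into a feasible solution of \eqref{prob:finite-adapt} of value at most ${\val}\eqref{prob:comp-adapt}+\varepsilon$, concluding by monotonicity of $k\mapsto{\val}\eqref{prob:finite-adapt}$. The only differences are cosmetic: you spell out the easy inequality ${\val}\eqref{prob:finite-adapt}\geq{\val}\eqref{prob:comp-adapt}$ and the finiteness of ${\val}\eqref{prob:comp-adapt}$, which the paper leaves implicit.
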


\begin{proof}
Let $\varepsilon>0$. The ``modified continuity assumption'' ensures then the existence of an $\x$ and, for every $\omega\in\Omega$, of a $\delta^{\omega}$ and a $\y^{\omega}$ such that:
\begin{enumerate}[label=(\roman*)]
    \item\label{feas} $A(\omega')\x + B(\omega')\y^{\omega} \leq \boldb(\omega')$ for every $\omega' \in \B(\omega,\delta^{\omega})$.
    \item\label{opt} $\boldc^{\top}\x + \boldd^{\top}\y^{\omega} \leq {\val}{\eqref{prob:comp-adapt}} + \varepsilon $.
\end{enumerate}
(Here, $\B(\omega,\delta)$ stands for the open ball of radius $\delta$ centered at $\omega$.) The balls $\B(\omega,\delta^{\omega})$, for $\omega\in\Omega$, form a cover of $\Omega$. By compactness of this latter set, there is a collection of finitely many such balls $\B(\omega,\delta^{\omega})$ forming already a cover of $\Omega$. In other words, there exist $\omega_1,\ldots,\omega_k$ in $\Omega$, for some positive integer $k$, such that $\bigcup_{i=1}^k \B(\omega_i,\delta^{\omega_i}) = \Omega$. Set $\y_i\coloneqq \y^{\omega_i}$ and $\Omega_i \coloneqq \B(\omega_i,\delta^{\omega_i})$. By \ref{feas}, we have $A(\omega)\x+B(\omega)\y_i\leq \boldb(\omega)$ for every $i \in [k]$ and every $\omega \in \Omega_i$. This means that $\Omega_1,\ldots,\Omega_k$ and $\x,\y_1,\ldots,\y_k$ form a feasible solution of \eqref{prob:finite-adapt}. By \ref{opt}, we have then ${\val}{\eqref{prob:finite-adapt}}\leq {\val}{\eqref{prob:comp-adapt}} + \varepsilon$.
This means that for every $\varepsilon > 0$, there exists an integer $k$ such that ${\val}{\eqref{prob:finite-adapt}}\leq {\val}{\eqref{prob:comp-adapt}} + \varepsilon$. On the other hand, by definition of \eqref{prob:finite-adapt}), the quantity ${\val}{\eqref{prob:finite-adapt}}$ is non-increasing in $k$, and is lower bounded by \eqref{prob:comp-adapt}. Thus, the limit $\lim_{k \rightarrow +\infty}{\val}{\eqref{prob:finite-adapt}}$ exists, and is lower bounded by ${\val}{\eqref{prob:comp-adapt}}$ and upper bounded by ${\val}{\eqref{prob:comp-adapt}} + \varepsilon$ for all $\varepsilon>0$. The desired conclusion follows then immediately.
\end{proof}

\begin{remark}
    The proof makes clear that the convergence result remains true under ``modified continuity assumption'' for arbitrary dependence of $A(\cdot)$, $B(\cdot)$, and $\boldb(\cdot)$ to the uncertainty.
\end{remark}

\section{Tractable cases}

Assuming convexity of the $\Omega_i$'s in the cover of \eqref{prob:finite-adapt} does not increase the optimal value; see Section~\ref{subsubsec:cover} below. In the setting of Theorems~\ref{thm:k_2} and~\ref{thm:k_3}, much stronger assumptions can actually be done on this cover. In essence, the $\Omega_i$ can then be assumed to be polytopes with not too many vertices and that brings us to finitely many configurations. Proving that these further  properties can be safely assumed is the objective of Section~\ref{sec:geom}. In Section~\ref{sec:alg}, this is turned into an algorithm that consists in enumerating all possible configurations. In that section, we also prove Proposition~\ref{prop:omega_dim_1}.

\subsection{Geometric preliminaries}
\label{sec:geom}

\subsubsection{Cover, partition, and convexity}\label{subsubsec:cover}

The next lemma is probably common knowledge in the area of robust optimization but we are not aware of any reference in the literature. For sake of completeness, we provide a formal statement together with a proof, and an illustration.

\begin{lemma}\label{lem:convexity}
The $\Omega_i$'s in \eqref{prob:finite-adapt} can be assumed to be closed and convex.
\end{lemma}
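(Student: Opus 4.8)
The plan is to show that, starting from any feasible solution of \eqref{prob:finite-adapt} with an arbitrary cover $\Omega_1,\dots,\Omega_k$, we can replace each $\Omega_i$ by a closed convex set without destroying feasibility and without changing the values of $\x,\y_1,\dots,\y_k$ (hence without changing the objective value, which depends only on these variables). The key observation is that, for a fixed assignment of values $\x$ and $\y_i$, the feasibility constraint $A(\omega)\x + B(\omega)\y_i \leq \boldb(\omega)$ is required to hold for all $\omega \in \Omega_i$; so the natural candidate replacement for $\Omega_i$ is the set of \emph{all} $\omega \in \Omega$ for which this constraint is satisfied.

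First I would define, for each $i \in [k]$,
\[
\widetilde{\Omega}_i \coloneqq \{\omega \in \Omega : A(\omega)\x + B(\omega)\y_i \leq \boldb(\omega)\}.
\]
Since $A(\cdot)$, $B(\cdot)$, and $\boldb(\cdot)$ depend affinely on $\omega$, each scalar constraint $\bigl(A(\omega)\x + B(\omega)\y_i - \boldb(\omega)\bigr)_j \leq 0$ is an affine (in fact linear-plus-constant) inequality in $\omega$, so $\widetilde{\Omega}_i$ is an intersection of finitely many closed halfspaces with $\Omega$, i.e.\ a closed convex set (indeed a polytope, as $\Omega$ is a polytope). Next I would verify the two properties needed: (a) feasibility is preserved, which is immediate since $A(\omega)\x + B(\omega)\y_i \leq \boldb(\omega)$ holds for every $\omega \in \widetilde{\Omega}_i$ by definition; and (b) the new sets still cover $\Omega$. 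For (b), note that the original feasibility of the solution gives $\Omega_i \subseteq \widetilde{\Omega}_i$ for each $i$: every $\omega \in \Omega_i$ satisfies the constraint attached to index $i$. Since $\bigcup_{i} \Omega_i = \Omega$, we get $\bigcup_i \widetilde{\Omega}_i = \Omega$ as well.

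Thus $\widetilde{\Omega}_1,\dots,\widetilde{\Omega}_k$ together with the unchanged $\x,\y_1,\dots,\y_k$ form a feasible solution of \eqref{prob:finite-adapt} with the same objective value, in which every set in the cover is closed and convex. This shows that restricting the optimization to closed convex covers does not increase the optimal value; the reverse inequality is trivial because any closed convex cover is in particular a cover. I do not expect a genuine obstacle here: the only point requiring a little care is making explicit that affine dependence on $\omega$ is what turns each feasibility region into a convex (polyhedral) set, and that feasibility of the starting solution is precisely the inclusion $\Omega_i \subseteq \widetilde{\Omega}_i$ that keeps the collection a cover. A remark worth inserting is that the replacement may change the $\Omega_i$ but never the output variables $\x,\y_1,\dots,\y_k$, which is exactly what matters since the excerpt emphasizes that only these are reported and the cover is recovered afterward.
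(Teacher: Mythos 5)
Your proof is correct and follows essentially the same route as the paper: both define the enlarged set $\{\omega \in \Omega : A(\omega)\x + B(\omega)\y_i \leq \boldb(\omega)\}$, use the affine dependence on $\omega$ to get closedness and convexity, and observe that feasibility of the original solution gives the inclusion $\Omega_i \subseteq \widetilde{\Omega}_i$, so the new sets still cover $\Omega$. The only difference is presentational: you spell out the halfspace-intersection argument and the preservation of the objective value, which the paper leaves implicit.
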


In other words, for every feasible solution of \eqref{prob:finite-adapt}, there exists a feasible solution with the same $\x$ and $\y_i$'s, and with closed and convex $\Omega_i$'s. Note that these two solutions have necessarily the same objective value. We believe that we can further assume that the $\Omega_i$'s are actually polytopes. (This is actually established below implicitly for $k \leq 3$. For $k=3$, this results from  Lemmas~\ref{lem:cover},~\ref{lem:cover-2}, and~\ref{lem:3conv}.) Even if we failed to find a simple proof of this fact for all $k$, we think that this should not be too difficult. Since this is not required to establish our main results, we did not push further in that direction.

\begin{proof}[Proof of Lemma~\ref{lem:convexity}]
Let $\Omega_1,\ldots,\Omega_k,\x,\y_1,\ldots,\y_k$ be a feasible solution of the problem \eqref{prob:finite-adapt}. Define  $\Omega_i' \coloneqq \{ \omega \in \Omega \colon A(\omega)\x+B(\omega)\y_i\leq \boldb(\omega)\}$. Since $A$, $B$, and $\boldb$ are affine, the sets $\Omega_i'$ are closed and convex. They form a cover of $\Omega$ because the $\Omega_i$ form such a cover and $\Omega_i \subseteq \Omega_i'$ for every $i$. Together with $\x$ and $(\y_1,\ldots,\y_k)$, they form a feasible solution of \eqref{prob:finite-adapt}, as desired. 
\end{proof}

In the literature, the problem~\eqref{prob:finite-adapt} is sometimes formulated with the $\Omega_i$ only constrained to form a partition of $\Omega$. The two formulations are completely equivalent, as we briefly explain now. A cover being more general than a partition, the value of the problem with the ``cover'' constraint is at most the value with the ``partition'' constraint. The value of the problem with the ``cover'' constraint is also at least the value with the ``partition'' constraint because allocating each $\omega$ in two or more $\Omega_i$'s to one of them arbitrarily keeps feasibility. However, the formulation with the ``cover'' constraint allows an extra convexity requirement, as shown by Lemma~\ref{lem:convexity}, while this is not necessarily the case with the ``partition'' constraint.

\begin{example}
Consider the following two-stage robust optimization problem:

\reqnomode
\LPblocktag{R}{\label{prob:cover_no_partition}}%
\hspace{-12cm}
\begin{minipage}{\linewidth-1cm}
	\begin{align}\notag
\min_{\substack{\Omega_1,\Omega_2 \\ y_1,y_2}}\quad & 0 \\
\text{s.t.}\quad &\notag(\omega_2 - \omega_1)y_i \leq 1-\omega_1 & \forall i \in \{1,2\} \;\; \forall (\omega_1,\omega_2) \in \Omega_i \\
&\notag(\omega_1 - \omega_2)y_i \leq 1+\omega_1 & \forall i \in \{1,2\} \;\; \forall (\omega_1,\omega_2) \in \Omega_i  \\
\notag & y_1,y_2 \in \{0, 1\}\, ,
\end{align}
\end{minipage}
\vspace{0.33cm}

\noindent with $\Omega = \conv\{(-2,0),(0,-2),(2,0),(0,2)\}$. Set 
\begin{align*}
\Omega_1 & \coloneqq \conv\{(-1,1),(0,2),(1,1),(1,-1),(0,-2),(-1,-1)\}, \, \\
\Omega_2 & \coloneqq \conv\{(-1,1),(1,1),(2,0),(1,-1),(-1,-1),(-2,0)\}\, .
\end{align*}
Figure~\ref{fig:omegas} represents $\Omega_1$ and $\Omega_2$.
Up to reindexing, these $\Omega_1$ and $\Omega_2$ form with  $(y_1,y_2) = (0,1)$ the only feasible solution of the problem~\eqref{prob:cover_no_partition} with convex $\Omega_i$'s. Thus, if the problem~\eqref{prob:cover_no_partition} is considered with the ``partition'' constraint, there is no solution with convex $\Omega_i$'s.

\begin{figure}[htbp]
    \centering
        \includegraphics[width=0.6\textwidth]{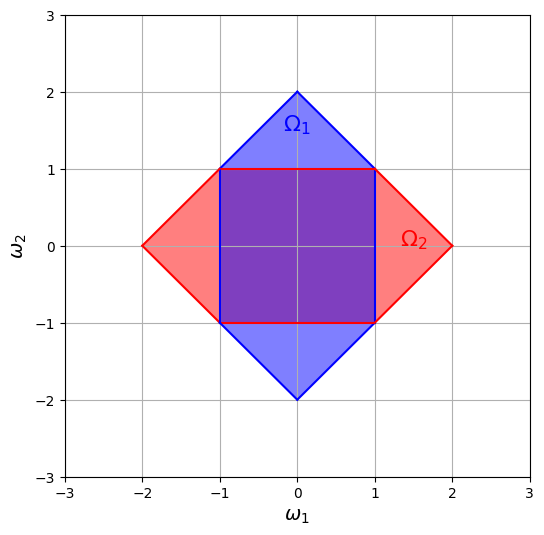}
    \caption{Representation of an optimal cover of problem~\eqref{prob:cover_no_partition}}
    \label{fig:omegas}
\end{figure}
\end{example}

\subsubsection{Covering polytopes with closed sets}

In the next section, we will establish geometric results that will allow assuming further that the $\Omega_i$'s in \eqref{prob:finite-adapt} are polytopes with extra properties, when $k=3$. We will rely on two geometric lemmas we state and prove now. They are actually direct consequences (with close proofs) of a classical 
result from combinatorial topology due to Fan.

\begin{theorem-nonumber}[{Fan's theorem~\cite[Theorem 2]{fan1952generalization}}]
    \textit{If $m$ closed sets $B_1,B_2,\ldots,B_m$ cover a $d$-dimensional sphere and if none of them contain antipodal points, then there exist $d+2$ indices $1 \leq i_1 < i_2 < \cdots < i_{d+2} \leq m$ such that
    \[
    B_{i_1} \cap -B_{i_2} \cap \cdots \cap (-1)^{d+1}B_{i_{d+2}} \neq \varnothing \, ,
    \] where $-B_i$ denotes the antipodal set of $B_i$.}
\end{theorem-nonumber}

Note that this theorem implies in particular the following: {\em if $d+1$ closed sets cover the $d$-dimensional sphere, then at least one of them contains antipodal points.} This latter fact is nothing else than the celebrated Lyusternik--Shnirel'mann theorem~\cite{liusternik1934methodes}, one of the many versions of the Borsuk--Ulam theorem.

\begin{lemma}
\label{lem:cover_polytope}
    Assume the boundary of a polytope be covered by $d$ closed sets. If the dimension of the polytope is at least $d$, then the convex hulls of these closed sets cover the polytope.
\end{lemma}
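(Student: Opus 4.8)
The plan is to reduce the statement to Fan's theorem (or rather, to its immediate consequence, the Lyusternik--Shnirel'mann theorem) by an argument by contradiction. Let $P$ be the polytope, of dimension at least $d$, whose boundary is covered by closed sets $B_1,\ldots,B_d$, and suppose for contradiction that the convex hulls $\conv(B_1),\ldots,\conv(B_d)$ do \emph{not} cover $P$. Then there is a point $\boldp$ in the interior of $P$ that lies in none of the $\conv(B_i)$. The key idea is that, since $\boldp\notin\conv(B_i)$, the set $B_i$ is contained in an open halfspace determined by a hyperplane through $\boldp$; this separation will be what prevents $B_i$ from containing ``antipodal'' points once we transport the picture onto a sphere centered at $\boldp$.

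First I would set up the radial correspondence between the boundary $\partial P$ and a sphere centered at $\boldp$. Because $\boldp$ is interior to $P$, the radial projection $\pi\colon\partial P \to S$, sending $\omega\mapsto \boldp + (\omega-\boldp)/\|\omega-\boldp\|$, is a homeomorphism onto the unit sphere $S$ of the affine hull of $P$ (centered at $\boldp$), whose dimension is at least $d-1$. The images $\pi(B_i)$ are then $d$ closed sets covering $S$. The heart of the matter is to check that no $\pi(B_i)$ contains a pair of antipodal points of $S$. Indeed, by the separation argument above, $B_i$ lies strictly on one side of some hyperplane through $\boldp$; projecting radially from $\boldp$, its image $\pi(B_i)$ lies in an open hemisphere of $S$, and an open hemisphere contains no antipodal pair. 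So we have $d$ closed sets covering a sphere of dimension at least $d-1$, none containing antipodal points.

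Now I would invoke the Borsuk--Ulam--type conclusion. If $\dim S = d-1$ exactly, then $d$ closed sets cover $S^{d-1}$ with none containing antipodal points, which directly contradicts the Lyusternik--Shnirel'mann theorem stated in the excerpt (which says that any cover of $S^{d-1}$ by $d$ closed sets must have one member containing antipodal points). If $\dim S > d-1$, the same contradiction persists, since covering a higher-dimensional sphere by only $d$ closed sets with no antipodal pair is \emph{a fortiori} impossible (one may restrict to a $(d-1)$-dimensional great subsphere, which the $B_i$ still cover and on which the no-antipodal-pair property is inherited). Either way we reach a contradiction, so $\boldp$ cannot exist and the convex hulls cover $P$.

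The main obstacle I anticipate is making the separation-to-hemisphere step fully rigorous, namely verifying that a closed set strictly separated from the center $\boldp$ by a hyperplane really does project into an \emph{open} hemisphere with no antipodal pair. One must be careful that $\conv(B_i)$ is closed and that $\boldp\notin\conv(B_i)$ genuinely yields \emph{strict} separation of $\boldp$ from $B_i$; since $P$ is compact, $B_i$ is compact and $\conv(B_i)$ is a compact convex set not containing $\boldp$, so a supporting/separating hyperplane gives a closed halfspace whose complementary open halfspace contains $\boldp$, and $B_i$ sits in the opposite closed halfspace not through $\boldp$. Translating the separating hyperplane to pass through $\boldp$ then places $B_i$ in an open halfspace bounded by a hyperplane through $\boldp$, whose radial image is an open hemisphere; the remaining care is simply to ensure the inequality stays strict after this translation, which it does because $\boldp$ is strictly on the far side of the original hyperplane.
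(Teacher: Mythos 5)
Your proof is correct and takes essentially the same route as the paper's: both reduce the statement to the Lyusternik--Shnirel'mann corollary of Fan's theorem by radially projecting the covering sets onto a sphere centered at a (relative) interior point, the only differences being that the paper argues directly---an antipodal pair in a projected set exhibits the center as lying on a chord with both endpoints in the same $F_i$, hence in $\conv(F_i)$---while you argue by contradiction via a separating hyperplane, and that the paper fixes the dimension mismatch by taking a small $(d-1)$-sphere inside a $d$-dimensional affine subspace where you instead restrict to a great subsphere. These are cosmetic variations of one and the same argument.
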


\begin{proof}
    Let $F_1,F_2,\ldots,F_d$ be closed sets covering the boundary of a polytope $\Omega'$, and suppose that the dimension of the latter is at least $d$. Let $\x$ be a point of the relative interior of $\Omega'$. Consider an arbitrary $d$-dimensional affine subspace $H$ containing $\x$ and contained in the affine span of $\Omega'$. Let $S^{d-1}$ be a small sphere contained in $H \cap \relint \Omega'$ and centered at $\x$. Define $B_i$ as the points $\y$ in $S^{d-1}$ such that the ray originating at $\x$ and going through $\y$ intersects $\partial \Omega'$ in $F_i$. The sets $B_1, B_2,\ldots, B_d$ are closed and form a cover of $S^{d-1}$. By Fan's theorem, or more precisely by its corollary discussed just after the statement, one of the $B_i$'s contains two antipodal points, which implies that $\x$ is in the segment joining two points of $\partial \Omega'$ belonging to a same $F_i$. Hence, $\x$ belongs to $\conv(F_i)$. Since $\x$ was chosen arbitrary, every point of $\Omega'$ belongs to some $\conv(F_i)$, as desired.
\end{proof}

\begin{lemma}\label{lem:colorful_cover}
    Assume the boundary of a polygon be covered by three closed sets. If two of these closed sets do not intersect, then the convex hulls of these closed sets cover the polygon.
\end{lemma}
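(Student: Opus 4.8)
The plan is to mirror the proof of Lemma~\ref{lem:cover_polytope}, but to invoke Fan's theorem in full (with $d=1$) rather than merely its Lyusternik--Shnirel'mann corollary. The point is that a polygon is $2$-dimensional while we are covering its boundary with $3$ closed sets, so Lemma~\ref{lem:cover_polytope}, which would require dimension at least $3$, does not apply; the non-intersection hypothesis is exactly what compensates for this missing dimension.

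Let $F_1,F_2,F_3$ be the three closed sets, labelled so that the two that do not intersect receive the \emph{extreme} labels, i.e.\ $F_1\cap F_3=\varnothing$. Fix a point $\x$ in the relative interior of the polygon $\Omega'$, and let $S^1$ be a small circle centred at $\x$ and contained in $\relint\Omega'$. By convexity, for each direction $\y\in S^1$ the ray from $\x$ through $\y$ meets $\partial\Omega'$ in exactly one point. I would then define $B_i\subseteq S^1$ as the set of directions whose ray meets $\partial\Omega'$ in a point of $F_i$. Since the radial projection is a homeomorphism $S^1\to\partial\Omega'$, the sets $B_1,B_2,B_3$ are closed, cover $S^1$, and satisfy $B_1\cap B_3=\varnothing$.

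The core of the argument is a proof by contradiction. Suppose none of $B_1,B_2,B_3$ contains a pair of antipodal points. Then Fan's theorem applies with $d=1$ and $m=3$; since the three required indices $i_1<i_2<i_3$ must equal $1,2,3$, it yields $B_1\cap(-B_2)\cap B_3\neq\varnothing$. The crucial observation is the sign pattern $(-1)^0,(-1)^1,(-1)^2=+,-,+$: the first and third sets occur with the \emph{same} sign, so any point of this intersection lies in $B_1\cap B_3$, contradicting $B_1\cap B_3=\varnothing$. Hence some $B_i$ does contain antipodal points $\y$ and $-\y$; as in Lemma~\ref{lem:cover_polytope}, the two opposite rays meet $\partial\Omega'$ in two points of $F_i$ whose joining segment passes through $\x$, so $\x\in\conv(F_i)$.

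Finally, since $\x$ was an arbitrary point of $\relint\Omega'$ and every boundary point already lies in some $F_i\subseteq\conv(F_i)$, the sets $\conv(F_1),\conv(F_2),\conv(F_3)$ cover $\Omega'$, as claimed. I expect the only genuinely delicate step to be the labelling: one must place the two disjoint sets at the extreme positions $1$ and $3$ precisely so that they are assigned the same sign in Fan's conclusion. This is where the non-intersection hypothesis is consumed, and it is exactly what permits running the argument on $S^1$ rather than on a $2$-dimensional sphere.
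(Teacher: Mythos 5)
Your proof is correct and follows exactly the paper's own argument: radial projection of the cover onto a small circle around an interior point, followed by Fan's theorem with $d=1$, $m=3$ to extract antipodal points in some $B_i$. The only difference is that you spell out explicitly the step the paper leaves implicit—namely that the conclusion $B_1\cap(-B_2)\cap B_3\neq\varnothing$ of Fan's theorem contradicts $B_1\cap B_3=\varnothing$ because indices $1$ and $3$ carry the same sign, which is precisely why the disjoint pair must be labelled $F_1,F_3$.
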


\begin{proof}
        Let $F_1,F_2,F_3$ be closed sets covering the boundary of a polygon $\Omega'$. Suppose w.l.o.g. that $F_1 \cap F_3 = \varnothing$. Let $\x$ be a point of the relative interior of $\Omega'$. Let $S^1$ be a small circle contained in $\relint \Omega'$ and centered at $\x$. Define $B_i$ as the points $\y$ in $S^1$ such that the ray originating at $\x$ and going through $\y$ intersects $\partial \Omega'$ in $F_i$. The sets $B_1, B_2,B_3$ are closed and form a cover of $S^1$. Since $F_1 \cap F_3 = \varnothing$, we have $B_1 \cap B_3 = \varnothing$ as well. By Fan's theorem
    , one of the $B_i$'s contains two antipodal points, which implies that $\x$ is in the segment joining two points of $\partial \Omega'$ belonging to a same $F_i$. Hence, $\x$ belongs to $\conv(F_i)$. Since $\x$ was chosen arbitrary, every point of $\Omega'$ belongs to some $\conv(F_i)$, as desired.
\end{proof}



We finish this section with Berge's theorem, which will also be used in the next section. It replaces the dimension condition of the celebrated Helly theorem (see, e.g.,~\cite{danzer1963helly}) by a condition on the union.

\begin{theorem-nonumber}[Berge theorem {\cite[Theorem 1.1]{barany2006berge}}]
    \textit{Consider a family of $m$ convex sets in $\bbR^d$ whose union is convex. If every $m-1$ members have a common point, then the whole family has a common point.}
\end{theorem-nonumber}

\subsubsection{Covering polytopes with three convex sets}\label{subsubsec:three}

This section proves useful lemmas to establish the validity of our algorithms in the case $k=3$. Similar properties can also be assumed when $k=2$, but everything is then much simpler and this will be briefly discussed in Section~\ref{sec:alg}.

For each edge of $\Omega$, we choose an arbitrary orientation. So, from now on, each edge of $\Omega$ is directed and this makes the $1$-skeleton of $\Omega$ a directed graph. We introduce the following object that will play a crucial role in our proofs. Consider a cover of $V(\Omega)$ with three sets $V_1,V_2,V_3$ and a labeling $\lambda$ of the edges $e$ of $\Omega$ with non-empty subsets of $\{1,2,3\}$ satisfying the following conditions:
\begin{enumerate}[label=(\alph*)]
    \item\label{single} if $\lambda(e)=\{i\}$, then the two endpoints of $e$ belongs to $V_i$.
    \item\label{pair} if $\lambda(e)=\{i,j\}$ ($i\neq j$), then $V_i$ contains one endpoint of $e$ and $V_j$ contains the other endpoint.
    \item\label{three} if $\lambda(e)=\{1,2,3\}$, then one of the $V_i$'s contains one endpoint of $e$, another one contains the other endpoint, and the last one contains no endpoint.
\end{enumerate}
Consider moreover a point $t_e$ in each edge $e$ with $|\lambda(e)|=2$, and two points $u_e$ and $v_e$ in each edge $e$ with $|\lambda(e)| = 3$, the tail of $e$, the point $u_e$, the point $v_e$, and the head of $e$ being in this order on $e$ (with possibly equality between $u_e$ and $v_e$). We call the tuple $\calC \coloneqq \bigl((V_i),\lambda,(t_e),(u_e),(v_e)\bigl)$ a {\em nice $1$-skeleton cover} of $\Omega$. The name finds its motivation from Lemma~\ref{lem:cover} below.

For a nice $1$-skeleton cover $\calC =  \bigl((V_i),\lambda,(t_e),(u_e),(v_e)\bigl)$ of $\Omega$, we define
\[
    \begin{array}{rl}
    \overline{V_i}(\calC) \: \coloneqq & V_i \cup \bigl\{t_e \colon i \in \lambda(e) \text{ and } |\lambda(e)| = 2\bigl\} \\[1ex] &\cup\, \,\bigl\{u_e,v_e \colon  V_i \text{ contains no endpoint of $e$ and } |\lambda(e)| = 3 \} \\[1ex] &\cup\, 
    \bigl\{u_e \colon V_i \text{ contains the tail of $e$ and } |\lambda(e)| = 3 \bigl\} \\[1ex] & \cup\,   \bigl\{v_e \colon V_i \text{ contains the head of $e$ and } |\lambda(e)| = 3 \bigl\}  \, .
    \end{array}
    \]

\begin{lemma}\label{lem:cover}
   The sets $\conv(\overline{V_i}(\calC))$ form a cover of the $1$-skeleton of $\Omega$.
\end{lemma}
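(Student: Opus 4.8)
The plan is to argue edge by edge: since the $1$-skeleton of $\Omega$ is the union of its edges, it suffices to show that each edge $e$ lies inside $\conv(\overline{V_1}(\calC)) \cup \conv(\overline{V_2}(\calC)) \cup \conv(\overline{V_3}(\calC))$. I would organize this as a case analysis on the size of $\lambda(e)$. In each case I would exhibit a subdivision of $e$ into consecutive subsegments, each of whose two extremities lie in a common $\overline{V_i}(\calC)$; convexity then places that subsegment inside $\conv(\overline{V_i}(\calC))$, and the subsegments together tile $e$.

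The two easy cases are $|\lambda(e)| = 1$ and $|\lambda(e)| = 2$. If $\lambda(e) = \{i\}$, condition \ref{single} puts both endpoints of $e$ in $V_i \subseteq \overline{V_i}(\calC)$, so the whole edge lies in $\conv(\overline{V_i}(\calC))$. If $\lambda(e) = \{i,j\}$, condition \ref{pair} puts one endpoint $a$ in $V_i$ and the other endpoint $b$ in $V_j$, while by definition $t_e$ belongs to both $\overline{V_i}(\calC)$ and $\overline{V_j}(\calC)$ (the second clause applies since $i,j \in \lambda(e)$ and $|\lambda(e)| = 2$). Thus the subsegment $[a,t_e]$ lies in $\conv(\overline{V_i}(\calC))$ and $[t_e,b]$ lies in $\conv(\overline{V_j}(\calC))$, and since $t_e$ is a point of $e$, these two subsegments cover $e$.

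The heart of the argument is the case $|\lambda(e)| = 3$, where I would split $e$ at $u_e$ and $v_e$ into the three consecutive subsegments $[\mathrm{tail}, u_e]$, $[u_e, v_e]$, and $[v_e, \mathrm{head}]$, which tile $e$ thanks to the prescribed order $\mathrm{tail}$--$u_e$--$v_e$--$\mathrm{head}$. Because $V_1,V_2,V_3$ cover $V(\Omega)$, the tail belongs to some class $V_p$ and the head to some class $V_q$; the fourth clause in the definition of $\overline{V_p}(\calC)$ then supplies $u_e$ (as $V_p$ contains the tail) and the fifth clause in $\overline{V_q}(\calC)$ supplies $v_e$ (as $V_q$ contains the head), so the two outer subsegments are covered by $\conv(\overline{V_p}(\calC))$ and $\conv(\overline{V_q}(\calC))$. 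For the middle subsegment I would invoke condition \ref{three}: it guarantees a class $V_r$ containing no endpoint of $e$, whence the third clause puts both $u_e$ and $v_e$ in $\overline{V_r}(\calC)$, placing $[u_e,v_e]$ inside $\conv(\overline{V_r}(\calC))$.

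I expect the only real obstacle to be the bookkeeping of which of $u_e,v_e$ lands in which $\overline{V_i}(\calC)$, and in particular the recognition that condition \ref{three} is exactly what the middle subsegment needs: if \emph{every} class met $\{\mathrm{tail},\mathrm{head}\}$, then one could have $u_e$ only in the classes carrying the tail and $v_e$ only in those carrying the head, so that no single $\overline{V_i}(\calC)$ would contain both $u_e$ and $v_e$ and $[u_e,v_e]$ could escape all three convex hulls. Once the five clauses defining $\overline{V_i}(\calC)$ are read against conditions \ref{single}--\ref{three}, verifying the membership claims and that the subsegments tile each edge is routine, and the lemma follows.
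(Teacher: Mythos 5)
Your proposal is correct and follows essentially the same route as the paper's own proof: an edge-by-edge case analysis on $|\lambda(e)|$, splitting each edge at $t_e$ (resp.\ at $u_e$ and $v_e$) into subsegments whose extremities lie in a common $\overline{V_i}(\calC)$, and concluding by convexity. The only cosmetic difference is that the paper first dispatches all edges whose two endpoints share a class $V_i$ before examining $\lambda(e)$, whereas you case directly on $|\lambda(e)|$; both organizations yield the same argument.
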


\begin{proof}
First we prove that the $\conv(\overline{V_i}(\calC))$ form a cover of the $1$-skeleton of $\Omega$.
Consider an edge $e$ of $\Omega$. If the two endpoints of $e$ belong to a same $V_i$, then we are done by convexity. Assume thus that the two endpoints of $e$ do not belong to a same $V_i$. By definition of $\lambda$, we have $|\lambda(e)| \geq 2$. Suppose first $|\lambda(e)| = 2$. We can write $\lambda(e) = \{t,h\}$ such that $V_t$ contains the tail of $e$ and $V_h$ contains the head of $e$. We have then $e = [V_t\cap e,t_e] \cup [t_e, V_h\cap e]$. Since $t_e \in \overline{V_t}(\calC) \cap \overline{V_h}(\calC)$ by construction, we are done by convexity. Suppose then $|\lambda(e)| = 3$. Then it is possible to choose $t,h \in \{1,2,3\}$ such that $V_t$ contains the tail of $e$ and $V_h$ contains the head of $e$. Denote by $\ell$ the index distinct from $t$ and $h$. (Notice that $t \neq h$ since we are in a case where no $V_i$ contains both endpoints of $e$.) We have $e = [V_t \cap e, u_e] \cup [u_e,v_e] \cup [v_e,V_h \cap e]$. Since $u_e \in \overline{V_t}(\calC) $, $v_e \in \overline{V_h}(\calC) $, and $u_e,v_e \in \overline{V_{\ell}}(\calC)$ by construction, we are done by convexity.
\end{proof}

Consider a cover of the $1$-skeleton of $\Omega$ with three non-empty closed convex sets $C_1,C_2,C_3$. We define $F(C_1,C_2,C_3)$ as the set of two-dimensional faces of $\Omega$ such that the $C_i$'s are pairwise intersecting on the boundary of the face. 

We define similarly the notation $F(V_1,V_2,V_3,\lambda)$ as follows. Consider a nice $1$-skeleton cover $\calC$ of $\Omega$, with the $t_e,u_e,v_e$ in the relative interiors of the edges $e$, and with $u_e \neq v_e$ for all $e$. Then set \[
F(V_1,V_2,V_3,\lambda)\coloneqq F\bigl(\conv(\overline{V_1}(\calC)),\conv(\overline{V_2}(\calC)),\conv(\overline{V_3}(\calC))\bigl) \, .
\]
This is well-defined because the intersection pattern of the $\conv(\overline{V_i}(\calC))$'s clearly does not depend on the exact location of the $t_e,u_e,v_e$. Note that the following relation holds more generally
\begin{equation}\label{eq:2f}
F\bigl(\conv(\overline{V_1}(\calC)),\conv(\overline{V_2}(\calC)),\conv(\overline{V_3}(\calC))\bigl) \subseteq F(V_1,V_2,V_3,\lambda) \, ,
\end{equation}
with equality when the $t_e$, $u_e$, and $v_e$ are in the relative interior of $e$, and $u_e$ and $v_e$ are distinct. (The inclusion may be strict since the sets $\conv(\overline{V_1}(\calC))$, $\conv(\overline{V_2}(\calC))$, $\conv(\overline{V_3}(\calC))$ may have a common point.)

\begin{lemma}\label{lem:cover-2}
    Consider a cover of the $1$-skeleton of $\Omega$ with three closed convex sets $C_1,C_2,C_3$. Assume moreover that we are given a point $t_f$ in each two-dimensional face $f$ in $F(C_1,C_2,C_3)$. Then the sets $\conv\bigl(C_i \cup \{t_f\colon f \in F(C_1,C_2,C_3)\}\bigl)$ form a cover of $\Omega$.
\end{lemma}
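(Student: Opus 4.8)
The plan is to abbreviate $\tilde{C}_i \coloneqq \conv\bigl(C_i \cup \{t_f \colon f \in F(C_1,C_2,C_3)\}\bigr)$ for $i \in \{1,2,3\}$ and to prove that $\Omega \subseteq \tilde{C}_1 \cup \tilde{C}_2 \cup \tilde{C}_3$. First I would record that each $\tilde{C}_i$ is closed and convex: it is the convex hull of the union of the closed convex set $C_i$ with a finite set of points, and since we may harmlessly replace $C_i$ by the compact set $C_i \cap \Omega$ (this changes neither the fact that the $C_i$ cover the $1$-skeleton nor the set $F(C_1,C_2,C_3)$, as all the relevant intersection patterns live on the $1$-skeleton $\subseteq \Omega$, and it only shrinks the $\tilde{C}_i$), such a convex hull is in fact compact. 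The proof proper proceeds by induction on the dimension $m$ of the faces of $\Omega$, establishing that every face $G$ of $\Omega$ of dimension $m$ satisfies $G \subseteq \tilde{C}_1 \cup \tilde{C}_2 \cup \tilde{C}_3$; applying this to $G = \Omega$ yields the lemma.

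For the base cases, faces of dimension $0$ or $1$ lie in the $1$-skeleton of $\Omega$ and are therefore already covered by $C_1 \cup C_2 \cup C_3 \subseteq \tilde{C}_1 \cup \tilde{C}_2 \cup \tilde{C}_3$. The crucial base case is that of a two-dimensional face $f$, where the definition of $F(C_1,C_2,C_3)$ enters. If $f \notin F(C_1,C_2,C_3)$, then by definition two of the closed sets $C_1 \cap \partial f$, $C_2 \cap \partial f$, $C_3 \cap \partial f$ covering $\partial f$ are disjoint, so Lemma~\ref{lem:colorful_cover} applies to the polygon $f$ and gives $f \subseteq \bigcup_i \conv(C_i \cap \partial f) \subseteq \bigcup_i C_i$; in particular $f$ is covered without invoking any $t_f$. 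If $f \in F(C_1,C_2,C_3)$, I would instead use the point $t_f \in f$ through a radial argument: for any $\y \in f$ with $\y \neq t_f$, the ray emanating from $t_f$ through $\y$ leaves the compact convex set $f$ at a point $\z \in \partial f$ with $\y \in [t_f,\z]$; since $\partial f$ lies in the $1$-skeleton, $\z \in C_i$ for some $i$, whence $\y \in \conv\bigl(\{t_f\} \cup \{\z\}\bigr) \subseteq \tilde{C}_i$, while $\y = t_f$ lies in every $\tilde{C}_i$.

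For the inductive step, consider a face $G$ with $m \coloneqq \dim G \geq 3$. By the induction hypothesis every proper face of $G$ is contained in $\tilde{C}_1 \cup \tilde{C}_2 \cup \tilde{C}_3$, so the boundary $\partial G$ is covered by the three closed sets $\tilde{C}_1 \cap \partial G$, $\tilde{C}_2 \cap \partial G$, $\tilde{C}_3 \cap \partial G$. Since $\dim G = m \geq 3$, Lemma~\ref{lem:cover_polytope} (with $d = 3$) guarantees that the convex hulls $\conv(\tilde{C}_i \cap \partial G)$ cover $G$; as each $\tilde{C}_i$ is convex, $\conv(\tilde{C}_i \cap \partial G) \subseteq \tilde{C}_i$, and hence $G \subseteq \tilde{C}_1 \cup \tilde{C}_2 \cup \tilde{C}_3$. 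Taking $G = \Omega$ finishes the argument when $\dim \Omega \geq 3$, while for $\dim \Omega \leq 2$ the conclusion is already delivered by the base cases, treating $\Omega$ itself as the relevant face.

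The main obstacle I anticipate lies in the two-dimensional case: one must correctly match the two regimes, namely faces outside $F(C_1,C_2,C_3)$, handled by the disjointness hypothesis of Lemma~\ref{lem:colorful_cover}, versus faces inside $F(C_1,C_2,C_3)$, where pairwise intersection on the boundary is exactly what prevents that lemma from applying and forces the use of the auxiliary point $t_f$. A secondary technical point is ensuring that the $\tilde{C}_i$ are closed, so that Lemma~\ref{lem:cover_polytope} may be invoked legitimately at every level of the induction; this is where the reduction to the compact sets $C_i \cap \Omega$ is convenient.
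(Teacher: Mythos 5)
Your proof is correct and follows essentially the same route as the paper's: induction on the dimension of the faces of $\Omega$, with two-dimensional faces split according to membership in $F(C_1,C_2,C_3)$ (Lemma~\ref{lem:colorful_cover} for faces outside $F(C_1,C_2,C_3)$, a radial argument from $t_f$ for faces inside it), and Lemma~\ref{lem:cover_polytope} applied to the three closed sets for the inductive step in dimension at least three. The only difference is that you explicitly justify the closedness of the sets $\conv\bigl(C_i \cup \{t_f\colon f \in F(C_1,C_2,C_3)\}\bigr)$ by first replacing each $C_i$ with the compact set $C_i \cap \Omega$, a point the paper's proof asserts without detail---a welcome addition.
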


\begin{proof}
   The sets $\conv\bigl(C_i \cup \{t_f\colon f \in F(C_1,C_2,C_3)\}\bigl)$ cover  the $1$-skeleton of $\Omega$, as $C_1,C_2,C_3$ already cover the $1$-skeleton of $\Omega$. Consider now a face $f$ of $\Omega$ of dimension at least two (the face $f$ can be $\Omega$ itself). We prove that $f \subseteq \bigcup_{i \in \{1,2,3\}}\conv\bigl(C_i \cup \{t_f\colon f \in F(C_1,C_2,C_3)\}\bigl)$ by dealing with the case where the dimension of $f$ is two separately, and by dealing with the other cases by induction.
    
Suppose first $f$ is a two-dimensional face of $\Omega$. If $f$ belongs to $F(C_1,C_2,C_3)$, then the sets $\conv\bigl(C_i \cup \{t_f\colon f \in F(C_1,C_2,C_3)\}\bigl)$ cover $f$ since $t_f$ is included in each 
of them and since the $1$-skeleton of $f$ is already covered. If $f$ does not belong to $F(C_1,C_2,C_3)$, 
then $f$ is covered by the sets $C_i$ (by Lemma~\ref{lem:colorful_cover} applied on the restriction of the $C_i$'s on the boundary of $f$).

Suppose then that the dimension of $f$ is at least three. By induction, the boundary of $f$ is covered by the three closed sets $\conv\bigl(C_i \cup \{t_f\colon f \in F(C_1,C_2,C_3)\}\bigl)$. These three sets are closed, and the desired conclusion follows from Lemma~\ref{lem:cover_polytope}. (This lemma is stated with $d$ closed sets, but the empty set being closed, we can apply it here.)
\end{proof}

Berge's theorem is used in the proof of the next lemma.

\begin{lemma}\label{lem:3conv}
Let $\Omega_1$, $\Omega_2$, $\Omega_3$ be three closed convex sets covering $\Omega$. Then there exists a nice $1$-skeleton cover $\calC = \bigl((V_i),\lambda,(t_e),(u_e),(v_e)\bigl)$ such that $\overline{V_i}(\calC) \subseteq \Omega_i$ for all $i$ and such that the $\Omega_i$'s have a common point on every face $f$ in $F(V_1,V_2,V_3,\lambda)$.
\end{lemma}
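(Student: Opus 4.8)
The plan is to read off the combinatorial data of $\calC$ directly from the covering $\Omega_1,\Omega_2,\Omega_3$, placing every transition point inside the relevant pairwise intersection, and then to certify the second property one face at a time with Berge's theorem.

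\emph{Construction and the first property.} Since $\Omega_1,\Omega_2,\Omega_3$ cover $\Omega$, every vertex $w\in V(\Omega)$ lies in some $\Omega_i$; I assign it to one such set, obtaining a partition $V_1,V_2,V_3$ of $V(\Omega)$ with $V_i\subseteq\Omega_i$. I then label each edge $e$, with tail $a\in V_i$ and head $b\in V_j$, by inspecting the one-dimensional covering of $e$ by the subintervals $\Omega_1\cap e,\Omega_2\cap e,\Omega_3\cap e$. If $i=j$, set $\lambda(e)=\{i\}$. If $i\neq j$ and the interval $\Omega_i\cap e$ (which starts at $a$) meets the interval $\Omega_j\cap e$ (which ends at $b$), set $\lambda(e)=\{i,j\}$ and choose $t_e\in\Omega_i\cap\Omega_j\cap e$. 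Otherwise, writing $\Omega_i\cap e=[a,s]$ and $\Omega_j\cap e=[r,b]$ along $e$ with $s$ before $r$, the gap between them is covered by the third set $\Omega_\ell$, and since $\Omega_\ell$ is closed one gets $[s,r]\subseteq\Omega_\ell$; I set $\lambda(e)=\{1,2,3\}$, $u_e\coloneqq s$, $v_e\coloneqq r$. A routine verification shows that conditions~\ref{single}--\ref{three} hold and that every point added to $\overline{V_i}(\calC)$ lies in $\Omega_i$: the endpoints are in $\Omega_i$ by the coloring, $t_e\in\Omega_i\cap\Omega_j$, and $u_e=s\in\Omega_i\cap\Omega_\ell$, $v_e=r\in\Omega_\ell\cap\Omega_j$. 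Hence $\overline{V_i}(\calC)\subseteq\Omega_i$, and by convexity $\conv(\overline{V_i}(\calC))\subseteq\Omega_i$, for every $i$.

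\emph{Reduction to pairwise intersections.} For the second property I fix a two-dimensional face $f\in F(V_1,V_2,V_3,\lambda)$. The sets $\Omega_i\cap f$ are closed and convex, they cover $f$, and their union $f$ is convex. By Berge's theorem (with $d=2$, $m=3$) it then suffices to prove that $\Omega_i\cap f$ and $\Omega_j\cap f$ meet for each pair $i\neq j$; Berge then yields a point of $f$ common to all three, which is exactly a common point of $\Omega_1,\Omega_2,\Omega_3$ on $f$.

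\emph{The crux.} What remains is to deduce $\Omega_i\cap\Omega_j\cap f\neq\varnothing$ from the membership $f\in F(V_1,V_2,V_3,\lambda)$. By definition this membership is computed with a \emph{generic} placement $\calC^{\ast}$ of the transition points (in relative interiors, with $u_e\neq v_e$), and it says that the hulls $\conv(\overline{V_i}(\calC^{\ast}))$ pairwise meet on $\partial f$. The essential point is that $f$ is a face of $\Omega$ and each edge or vertex $g$ of $f$ is itself a face of $f$, so the face property of convex combinations gives $\conv(\overline{V_i}(\calC^{\ast}))\cap g=\conv(\overline{V_i}(\calC^{\ast})\cap g)$. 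Therefore a point of $\partial f$ lying in both $\conv(\overline{V_i}(\calC^{\ast}))$ and $\conv(\overline{V_j}(\calC^{\ast}))$ must be an interior point of some edge $g$ where the color-$i$ and color-$j$ arcs overlap; the vertex case is impossible because the coloring is a partition and no generic transition point sits at a vertex. Since on a single edge each color occupies one contiguous arc, such an overlap occurs only at a transition point of $g$ whose two adjacent colors are precisely $i$ and $j$. By construction every such transition point lies in $\Omega_i\cap\Omega_j$, so $\Omega_i\cap\Omega_j\cap\partial f\neq\varnothing$, completing the reduction.

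I expect this last step to be the main obstacle: the hulls defining $F(V_1,V_2,V_3,\lambda)$ are built from \emph{generic} points that need not belong to the $\Omega_i$, so one cannot directly compare them with $\Omega_i$ (indeed \eqref{eq:2f} only gives an inclusion in the unhelpful direction for the specific $\calC$). The face-decomposition argument is exactly the device that localizes the generic meeting to an edge-interior transition point whose two colors match, and it is only there that the construction guarantees a genuine point of $\Omega_i\cap\Omega_j$.
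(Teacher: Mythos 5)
Your proof is correct, and it follows the paper's overall strategy---read the combinatorial data off the covering, place each transition point inside a pairwise intersection $\Omega_i\cap\Omega_j$, and settle each face of $F(V_1,V_2,V_3,\lambda)$ by applying Berge's theorem to the sets $\Omega_i\cap f$---but it diverges on one choice that changes how much work is needed. The paper sets $V_i\coloneqq V(\Omega)\cap\Omega_i$, a \emph{cover} rather than a partition (a vertex keeps all its colors), and labels an edge by a pair whenever some pair of the $\Omega_i$'s covers it, reserving $\{1,2,3\}$ for the remaining edges. With these choices every $t_e$, $u_e$, $v_e$ automatically lies in the relative interior of its edge and $u_e\neq v_e$ (for instance, if $t_e$ were the tail, the tail would belong to both sets of the pair, hence to both corresponding $V_i$'s, and the edge would have been labeled by a singleton); hence the equality case of \eqref{eq:2f} applies, $F(V_1,V_2,V_3,\lambda)$ coincides with the intersection pattern of the \emph{actual} hulls $\conv(\overline{V_i}(\calC))\subseteq\Omega_i$, and the pairwise-intersection hypothesis of Berge's theorem is immediate. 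Your partition-based assignment destroys this genericity (when $\Omega_i\cap e$ is just the tail vertex, your $u_e$ is forced to equal the tail), and your ``crux'' paragraph is the price paid for that: you must recover the pairwise intersections from the \emph{generic} hulls defining $F(V_1,V_2,V_3,\lambda)$, which you do correctly via the face property of convex hulls, the contiguity of the color arcs on each edge, and the observation that the combinatorially identified transition point has an actual counterpart lying in $\Omega_i\cap\Omega_j$. So nothing is missing; your extra argument is sound, just avoidable, and it does have the small virtue of showing that an arbitrary one-color-per-vertex assignment suffices. One caveat: under a strict reading of the definition of a nice $1$-skeleton cover (``the tail of $e$, the point $u_e$, the point $v_e$, and the head of $e$ being in this order\dots with possibly equality between $u_e$ and $v_e$''), a placement with $u_e$ equal to the tail (or $v_e$ to the head) might not qualify; adopting the paper's choice of $V_i$ protects you on this definitional point as well.
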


\begin{proof}
The result being obvious when the dimension of $\Omega$ is $0$, suppose that the dimension of $\Omega$ is at least $1$.

Set $V_i \coloneqq V(\Omega) \cap \Omega_i$ for $i=1,2,3$. For each edge $e$, define $\lambda$ as follows. If both endpoints belong to a same $V_i$, pick arbitrarily such a set and define $\lambda(e)$ to be its index. If both endpoints do not belong to a same $V_i$ but there is a pair of $\Omega_i$'s covering the edge, pick arbitrarily such a pair (there might indeed be some choice), and define $\lambda(e)$ to be the corresponding indices. Since the $\Omega_i$ are closed, the $\Omega_i$ in the pair have a common point. We pick such a point and call it $t_e$. Finally, if both endpoints do not belong to a same $V_i$ but the three $\Omega_i$'s are needed to cover the edge, define $\lambda(e)$ to be $\{1,2,3\}$. Again because of the closedness of the $\Omega_i$'s, it is possible to pick one point $u_e$ in $\Omega_t \cap \Omega_{\ell}$ and one point $v_e$ in $\Omega_h \cap \Omega_{\ell}$, where $\Omega_t$ contains the tail of $e$, $\Omega_h$ contains the head of $e$, and $\ell \in \{1,2,3\}\setminus\{t,h\}$. (Such indices are defined unambiguously.)

It is straightforward to check that conditions~\ref{single} and~\ref{pair} hold. Condition~\ref{three} holds as well. Indeed, suppose $|\lambda(e)|=3$.  Each endpoint of $e$ belongs to a distinct $V_i$. Moreover, it is easy to see that if every endpoint were in a $V_i$, then the convexity of the $\Omega_i$'s would imply that two of the $\Omega_i$'s would already cover $e$, which would prevent setting $\lambda(e)$ to be $\{1,2,3\}$. The tuple $\calC = \bigl((V_i),\lambda,(t_e),(u_e),(v_e)\bigl)$ is a nice $1$-skeleton cover of $\Omega$ by definition. The definition of the $t_e$, $u_e$, and $v_e$ makes also clear that $\overline{V_i}(\calC) \subseteq \Omega_i$ for all $i$.

By Lemma~\ref{lem:cover}, the sets $\conv(\overline{V_i}(\calC))$ form a cover of the $1$-skeleton of $\Omega$. Consider a two-dimensional face $f$ in $F\bigl(\conv(\overline{V_1}(\calC)),\conv(\overline{V_2}(\calC)),\conv(\overline{V_3}(\calC))\bigl)$. By Berge's theorem, the three $\Omega_i$ have a common point on $f$. Notice that the $t_e$, $u_e$, and $v_e$ have been picked in the relative interior of their $e$, and that $u_e$ and $v_e$ are always distinct. As noted above, this implies that 
\[
F\bigl(\conv(\overline{V_1}(\calC)),\conv(\overline{V_2}(\calC)),\conv(\overline{V_3}(\calC))\bigl) = F(V_1,V_2,V_3,\lambda)\, ,
\]
which concludes the proof.
\end{proof}

\subsection{Algorithms}
\label{sec:alg}

In the sequel, we establish Proposition~\ref{prop:omega_dim_1}, Theorem~\ref{thm:k_2}, and Theorem~\ref{thm:k_3} by introducing explicitly the linear programs mentioned in the statements. The proofs do not depend on the exact nature of each variable (integral or continuous), which is therefore not stated explicitly in the linear programs.

\subsubsection{When $\Omega$ is one-dimensional}
Without loss of generality, we assume $\Omega = [0,1]$. We introduce the following linear program.
\begin{equation}
    \label{eq:prob_part_reformulation_1d}\tag{T}
    \begin{array}{rll}
    \displaystyle\min_{\substack{\x,\y_1,\ldots,\y_k\\ \omega_1,\ldots,\omega_{k-1}}} & \boldc^\top \x + {\displaystyle\max_{i \in [k]}\boldd^{\top}\y_i} &\\[1ex]
    \text{s.t.}
    & A\x+B\y_{i}\leq \boldb(\omega_{i-1}) & \forall i\in [k] \\[1ex]
    & A\x+B\y_i\leq \boldb(\omega_i) & \forall i\in [k] \\[1ex]
    & \omega_{i-1} \leq \omega_i & \forall i\in [k] \, ,
    \end{array}
\end{equation}
where $\omega_0 = 0$ and $\omega_k = 1$.

\begin{proof}[Proof of Proposition~\ref{prop:omega_dim_1}]
Let $\x$, $\y_1,\ldots,\y_k$, $\omega_1,\ldots,\omega_{k-1}$ be a feasible solution of~\eqref{eq:prob_part_reformulation_1d}. The sets $\Omega_i \coloneqq [\omega_{i-1},\omega_i]$ form a cover of $\Omega$. The fact that $\y_i$ satisfies $A\x+B\y_i\leq \boldb(\omega)$ for all $\omega \in \Omega_i$ follows from the affine dependence of $\boldb$, and $\Omega_1,\ldots,\Omega_k$, $\x$, $\y_1,\ldots,\y_k$ form a feasible solution of~\eqref{prob:finite-adapt} with the same value of the objective function.

Conversely, let $\Omega_1,\ldots,\Omega_k$, $\x$, $\y_1,\ldots,\y_k$ be a feasible solution of~\eqref{prob:finite-adapt}. By Lemma~\ref{lem:convexity}, the $\Omega_i$'s can be assumed to be closed and convex, i.e., there are closed intervals of the form $[\omega_i^{\text{left}},\omega_i^{\text{right}}]$. Up to reindexing, we have $\omega_1^{\text{left}} \leq \omega_2^{\text{left}} \leq \cdots \leq \omega_{k-1}^{\text{left}}$. The intervals $[\omega_i^{\text{left}},\omega_{i+1}^{\text{left}}]$ are then included in $\Omega_i$ for all $i$, which implies that $\x$, $\y_1,\ldots,\y_k$, $\omega_1^{\text{left}},\ldots,\omega_{k-1}^{\text{left}}$ is a feasible solution of~\eqref{eq:prob_part_reformulation_1d}, with the same value of the objective function.
\end{proof}

\subsubsection{When $k=2$ and $k=3$}
\label{sec:alg_2_3}
We focus on the case $k=3$ because it is the most difficult case. The case $k=2$ will be discussed below.

For a cover of $V(\Omega)$ with three sets $V_1,V_2,V_3$ and a labeling $\lambda$ of the edges $e$ of $\Omega$ with non-empty subsets of $\{1,2,3\}$ satisfying conditions~\ref{single},~\ref{pair},~\ref{three}, we introduce the following linear program:
\begin{equation}
    \label{eq:prob_part_reformulation}
    \tag{$\text{P}_{V_1,V_2,V_3,\lambda}$}
    \begin{array}{rll}
    \displaystyle\min_{\substack{\x,\y_1,\y_2,\y_3\\ \boldt,\uu,\vv}} & \boldc^\top \x + {\displaystyle\max_{i =1,2,3}\boldd^{\top}\y_i} &\\[1ex]
    \text{s.t.}
    & A\x+B\y_i\leq \boldb(\omega) & \forall i \in \{1,2,3\} \;\;\; \forall \omega \in V_i \\[1ex]
    & A\x+B\y_i\leq \boldb(t_e) & \forall e \text{ s.t. $|\lambda(e)|=2$}\;\;\; \forall i \in \lambda(e) \\[1ex]
     & A\x+B\y_i\leq \boldb(u_e) & \forall e \text{ s.t. $|\lambda(e)|=3$}\;\;\; \forall i \text{ s.t. $e$ has no endpoint in $V_i$} \\[1ex]
     & A\x+B\y_i\leq \boldb(v_e) & \forall e \text{ s.t. $|\lambda(e)|=3$}\;\;\; \forall i \text{ s.t. $e$ has no endpoint in $V_i$} \\[1ex]
          & A\x+B\y_i\leq \boldb(u_e) & \forall e\text{ s.t. $|\lambda(e)|=3$}\;\;\; \forall i \text{ s.t. $e$ has its tail in $V_i$}  \\[1ex]
             & A\x+B\y_i\leq \boldb(v_e) & \forall e \text{ s.t. $|\lambda(e)|=3$}\;\;\; \forall i \text{ s.t. $e$ has its head in $V_i$}
          \\ [1ex]
    & A\x+B\y_i\leq \boldb(t_f) & \forall f \in F(V_1,V_2,V_3,\lambda) \;\;\; \forall i \in \{1,2,3\}  \\[0.7ex]
    & t_e \in e & \forall e \text{ s.t. $|\lambda(e)|=2$}\\[0.7ex]
    & u_e,v_e \in e & \forall e \text{ s.t. $|\lambda(e)|=3$} \\[0.7ex]
    & v_e \in [u_e,\text{head}(e)] & \forall e \text{ s.t. $|\lambda(e)|=3$} \\[0.7ex]
    & t_f \in f &\forall f \in F(V_1,V_2,V_3,\lambda) \, .
    \end{array}
\end{equation}
(The notation $F(V_1,V_2,V_3,\lambda)$ has been introduced in Section~\ref{subsubsec:three}.) Note that \eqref{eq:prob_part_reformulation} is indeed a linear program: the last four constraints, defining the range of $(t_e),(t_f),(u_e),(v_e)$, can easily be expressed linearly. There are at most $7^{|V(\Omega)|+|E(\Omega)|}$ such linear programs: there are $7$ possibilities for each vertex (which $V_i$'s does the vertex belong to?); there are at most $7$ possibilities for each edge (which non-empty subset of $\{1,2,3\}$ labels the edge?). The proof of Theorem~\ref{thm:k_3} establishes that there is always one of these linear programs whose optimal solution provides an optimal solution of the problem~\eqref{prob:finite-adapt}.

\begin{proof}[Proof of Theorem~\ref{thm:k_3}]
Let $\x$, $\y_1$, $\y_2$, $\y_3$, $\boldt$, $\uu$, $\vv$ be a feasible solution of the problem~\eqref{eq:prob_part_reformulation}, for some choice of the $V_i$'s and $\lambda$. Then the tuple $\calC =  \bigl((V_i),\lambda,(t_e),(u_e),(v_e)\bigl)$ is a nice $1$-skeleton cover. According to Lemma~\ref{lem:cover}, the sets 
$\conv(\overline{V_i}(\calC))$ form a cover of the $1$-skeleton of $\Omega$. Lemma~\ref{lem:cover-2} and inclusion~\eqref{eq:2f} together imply that the $\Omega_i$'s, defined by \[
\Omega_i\coloneqq \conv\bigl(C_i \cup \{t_f\colon f \in F(C_1,C_2,C_3)\}\bigl) \, ,
\]
form a cover of $\Omega$, where $C_i \coloneqq \conv(\overline{V_i}(\calC))$.  The fact that $\y_i$ satisfies $A\x+B\y_i\leq \boldb(\omega)$ for all $\omega \in \Omega_i$ follows from the affine dependence of $\boldb$, and $\Omega_1$, $\Omega_2$, $\Omega_3$, $\x$, $\y_1$, $\y_2$, $\y_3$ form a feasible solution of the problem~\eqref{prob:finite-adapt} with the same value of the objective function.

Conversely, let $\Omega_1$, $\Omega_2$, $\Omega_3$, $\x$, $\y_1$, $\y_2$, $\y_3$ be a feasible solution of the problem~\eqref{prob:finite-adapt}. By Lemma~\ref{lem:convexity}, the $\Omega_i$'s can be assumed to be closed and convex. Lemma~\ref{lem:3conv} shows that there exists a nice $1$-skeleton cover $\calC = \bigl((V_i),\lambda,(t_e),(u_e),(v_e)\bigl)$ such that $\overline{V_i}(\calC) \subseteq \Omega_i$. Set $t_f$ as any point common to the $\Omega_i$'s for each face $f$ in $F(V_1,V_2,V_3,\lambda)$, as ensured in addition by the lemma. Then $\x$, $\y_1$, $\y_2$, $\y_3$, $\boldt = (t_e),(t_f)$, $\uu = (u_e)$, $\vv = (v_e)$ form a feasible solution of the problem~\eqref{eq:prob_part_reformulation}, with the same value of the objective function.
\end{proof}

Theorem~\ref{thm:k_2} (case $k=2$) can be proved along the same lines, but in a much simpler way.  Especially, 
we simply define a nice $1$-skeleton cover as a cover of $V(\Omega)$ with two sets $V_1,V_2$, and a point $t_e$ in each edge $e$ whose endpoints do not belong to a same $V_i$. Lemma~\ref{lem:cover} remains the same. Lemma~\ref{lem:cover-2} is not needed anymore (Lemma~\ref{lem:cover_polytope} can be used directly). Lemma~\ref{lem:3conv} becomes: {\em Let $\Omega_1$, $\Omega_2$ be two closed convex sets covering $\Omega$. Then there exists a nice $1$-skeleton cover $\calC = \bigl((V_i),(t_e)\bigl)$ such that $\overline{V_i}(\calC) \subseteq \Omega_i$ for both $i$.} The linear program to consider is then
\begin{equation}\label{prog:k2}
    \tag{$\text{P}_{V_1,V_2}$}
    \begin{array}{rll}
    \displaystyle\min_{\substack{\x,\y_1,\y_2\\ \boldt}} & \boldc^\top \x + {\displaystyle\max_{i =1,2}\boldd^{\top}\y_i} &\\[1ex]
    \text{s.t.}
    & A\x+B\y_i\leq \boldb(\omega) & \forall i \in \{1,2\} \;\; \forall \omega \in V_i \\[1ex]
    & A\x+B\y_i\leq \boldb(t_e) & \forall i \in \{1,2\}\;\; \forall e \in E[V_1;V_2]  \\[1ex]
    & t_e \in e & \forall e \in E[V_1;V_2] \, ,
    \end{array}
\end{equation}
where $E[V_1;V_2]$ denotes the set of edges whose endpoints do not belong to a same $V_i$. There are at most $3^{|V(\Omega)|}$ such linear programs: there are $3$ possibilities for each vertex (which $V_i$'s does the vertex belong to?).

\subsubsection{An explicit mixed integer linear program for $k=2$}
\label{sec:explicit_milp}

In the case $k=2$, instead of solving a series of problems~\eqref{prog:k2} with various $V_1,V_2$, we can actually solve the single mixed integer linear program given below. This provides a concrete way of solving the case $k=2$ relying mostly on off-the-shelf solvers. The correctness of the following program is immediate from the previous section:
\label{sec:add_mat}
\begin{equation}
    \tag{$\text{P}_{k=2}$}
    \begin{array}{rll}
    \displaystyle\min_{\substack{\x,\y_1,\y_2\\ \boldt}} & \boldc^\top \x + {\displaystyle\max_{i =1,2}\boldd^{\top}\y_i} &\\[1ex]
    \text{s.t.}
    & A\x+B\y_i\leq \boldb(\omega) a_{i,\omega} + M (1-a_{i,\omega})\boldsymbol{1}& \forall i \in \{1,2\} \;\; \forall \omega \in V(\Omega) \\[1ex]
    & A\x+B\y_i\leq \boldb(t_e)+M(1-b_e)\boldsymbol{1} & \forall i \in \{1,2\} \;\; \forall e \in E(\Omega) \\[1ex]
    & a_{1,\omega} + a_{2,\omega} \geq 1 & \forall \omega \in V(\Omega) \\[1ex]
    & a_{1,\text{t}(e)} + a_{2,\text{h}(e)}  - a_{2,\text{t}(e)} - a_{1,\text{h}(e)}- 1 \leq b_e & \forall e \in E(\Omega) \\[1ex]
    & a_{2,\text{t}(e)} + a_{1,\text{h}(e)}  - a_{1,\text{t}(e)} - a_{2,\text{h}(e)}- 1 \leq b_e & \forall e \in E(\Omega) \\[1ex]
    & t_e \in e & \forall e \in E(\Omega)\\[1ex]
    &a_{i,\omega} \in \{0,1\} &\forall i \in \{1,2\} \;\; \forall \omega \in V(\Omega) \\[1ex]
    & b_e \in \{0,1\} & \forall e \in E(\Omega) \, .
    \end{array}
\end{equation}
Here, $\boldsymbol{1}$ is the all-one vector, t$(e)$ (resp.\ h$(e)$) is the tail (resp.\ head) of $e$, and $M$ is a big-M, i.e., a sufficiently large constant. It is not difficult to check that $M \coloneqq 2\max_{\omega \in \Omega} \|\boldb(\omega)\|_{\infty}$ is a valid choice. 

We finish by giving the meaning of the extra binary variables: $a_{i,\omega}$ takes the value $1$ if $\omega$ is in $V_i$, and $0$ otherwise; $b_e$ takes the value $1$ if $e$ is in $E[V_1;V_2]$, and $0$ otherwise.

A similar program could be written for $k=3$. The writing of such a program is quite easy, would follow the same logic as for $k=2$, but would be cumbersome.

\bibliographystyle{siamplain}
\bibliography{biblio}
\end{document}